\theoremstyle{plain}
\newtheorem{thm}{Theorem}
\newtheorem{lem}{Lemma}
\newtheorem{prop}{Proposition}
\theoremstyle{definition}
\newtheorem{defn}{Definition}
\begin{document}

\title[On asymptotic expansions of generalized Bergman kernels]{On asymptotic expansions of generalized Bergman kernels on symplectic manifolds}

\author[Y. A. Kordyukov]{Yuri A. Kordyukov}
\address{Institute of Mathematics with Computing Centre\\
Ufa Federal Research Centre of 
         Russian Academy of Sciences\\
         112~Chernyshevsky str.\\ 450008 Ufa\\ Russia} \email{yurikor@matem.anrb.ru}

\thanks{The research is supported by the grant of Russian Science Foundation (project no. 17-11-01004)}

\subjclass[2000]{Primary 58J37; Secondary 53D50}

\keywords{Symplectic manifold, Bochner Laplacian, Bergman kernel, asymptotics, Toeplitz operators, quantization}

\begin{abstract}
A full off-diagonal asymptotic expansion is established for the generalized
Bergman kernels of the renormalized Bochner Laplacians associated with high tensor powers of a positive line bundle over a compact symplectic manifold. As an application, the algebra of Toeplitz operators on the symplectic manifold associated with the renormalized Bochner Laplacian is constructed. 
\end{abstract}

\dedicatory{Dedicated to the 130th anniversary of Vladimir Ivanovich Smirnov's birth}
\date{}

 \maketitle
\section{Introduction}\label{s:intro}
In this paper, we study the asymptotic behavior of the generalized Bergman kernels of the renormalized Bochner-Laplacians associated to high tensor powers of a positive line bundle over a compact symplectic manifold. So we consider a compact symplectic manifold  $(X,\omega)$ of dimension $2n$. Let $(L,h^L)$ be a Hermitian line bundle on $X$ with a Hermitian connection $\nabla^L : C^\infty(X,L)\to C^\infty(X,T^*X\otimes L)$. The curvature of this connection is given by $R^L=(\nabla^L)^2$. 
We will assume that $L$ satisfies the prequantization condition:
\[
\frac{i}{2\pi}R^L=\omega. 
\] 
Thus, $[\omega]\in H^2(X,\mathbb Z)$. Let $(E, h^E)$ be a Hermitian vector bundle on $X$ with Hermitian connection $\nabla^E$, and let $R^E$ be the curvature of $\nabla^E$.

Let $g$ be a Riemannian metric on $X$. Let $J_0 : TX\to TX$ be a skew-symmetric operator such that 
\[
\omega(u,v)=g(J_0u,v), \quad u,v\in TX. 
\]
Consider the operator $J : TX\to TX$ given by 
$J=J_0(-J^2_0)^{-1/2}$. 
Then $J$ is an almost complex structure compatible with $\omega$ and $g$, that is, $g(Ju,Jv)=g(u,v), \omega(Ju,Jv)=\omega(u,v)$ and $\omega(u,Ju)\geq 0$ for any $u,v\in TX$.  
  
Let $\nabla^{TX}$ be the Levi-Chivita connection of the metric $g$. For any $p\in \mathbb N$ denote by $L^p$ the $p$th tensor power of $L$. Let $\nabla^{L^p\otimes E}: C^\infty(X,L^p\otimes E)\to C^\infty(X,L^p\otimes E\otimes T^*X)$ be the connection on $L^p\otimes E$ induced by $\nabla^{L}$ and $\nabla^E$.
Denote by $\Delta^{L^p\otimes E}$ the induced Bochner-Laplacian acting on $C^\infty(X,L^p\otimes E)$ by the formula 
\[
\Delta^{L^p\otimes E}=\left(\nabla^{L^p\otimes E}\right)^*\nabla^{L^p\otimes E},
\]
where $\left(\nabla^{L^p\otimes E}\right)^*: C^\infty(X,L^p\otimes E\otimes T^*X)\to C^\infty(X,L^p\otimes E)$ stands for the formal adjoint of the operator $\nabla^{L^p\otimes E}$, and by $\Delta_p$ the renormalized Bochner Laplacian acting on $C^\infty(X,L^p\otimes E)$ by the formula
 \[
\Delta_p=\Delta^{L^p\otimes E}-p\tau,
 \]
where $\tau\in C^\infty(X)$ is given by
 \[
 \tau(x)=-\pi \operatorname{Tr}[J_0(x)J(x)],\quad x\in X.
 \]
The renormalized Bochner-Laplacian $\Delta_p$ was introduced by V. Guillemin and A. Uribe in \cite{Gu-Uribe}. When $(X,\omega)$ is a Kaehler manifold, it is twice the corresponding Kodaira-Laplacian on functions $\Box^{L^p}=\bar\partial^{L^p*}\bar\partial^{L^p}$.
 The asymptotics of its spectrum as $p\to \infty$ was studied in \cite{B-Uribe,Brav,Gu-Uribe,ma-ma02,ma-ma08}.    
 
Denote by $\sigma(\Delta_p)$ the spectrum of $\Delta_p$ in $L^2(X,L^p\otimes E)$. By \cite[Corollary 1.2]{ma-ma08} (see also \cite{Bismut-V,B-Uribe,Brav,Gu-Uribe}), there exists a constant $C_L>0$ such that 
 \[
 \sigma(\Delta_p)\subset [-C_L,C_L]\cup [2p\mu_0-C_L,+\infty),
 \]
for any $p$, where $\mu_0>0$ is given by
 \[
 \mu_0=\inf_{u\in T_xX, x\in X}\frac{iR^L_x(u,J(x)u)}{|u|_g^2}.
 \]
Consider the linear subspace $\mathcal H_p\subset L^2(X,L^p\otimes E)$ spanned by the eigensections of $\Delta_p$ corresponding to eigenvalues in $[-C_L,C_L]$. Let $P_{\mathcal H_p}$ be the orthogonal projection in $L^2(X,L^p\otimes E)$ onto $\mathcal H_p$. The smooth kernel $P_{q,p}(x,x^\prime)$, $x,x^\prime\in X$, of the operator $(\Delta_p)^qP_{\mathcal H_p}$ with respect to the Riemannian volume form $dv_X$ is called a generalized Bergman kernel of $\Delta_p$.

We are interested in the asymptotic behavior of the generalized Bergman kernel $P_{q,p}(x,x^\prime)$ as $p\to \infty$. 
First, we recall that, by \cite{ma-ma08}, for any $m\in \mathbb N$ and $\varepsilon>0$, we have
\[
|P_{q,p}(x,x^\prime)|_{C^m}=\mathcal O(p^{-\infty})
\] 
if $d(x,x^\prime)>\varepsilon$. To describe the asymptotic expansion of $P_{q,p}(x,x^\prime)$ near the diagonal, we introduce normal coordinates near an arbitrary point $x_0\in X$. 

Let $a^X$ be the injectivity radius of the Riemannian manifold $(X,g)$. We denote by $B^{X}(x_0,r)$ and $B^{T_{x_0}X}(0,r)$ the open balls in $X$ and $T_{x_0}X$ with center $x_0$ and radius $r$, respectively. We identify $B^{T_{x_0}X}(0,a^X)$ with $B^{X}(x_0,a^X)$ via the exponential map $\exp^X_{x_0}$. Furthermore, we choose trivializations of the bundles $L$ and $E$ over $B^{X}(x_0,a^X)$,   identifying their fibers $L_Z$ and $E_Z$ at $Z\in B^{T_{x_0}X}(0,a^X)\cong B^{X}(x_0,a^X)$ with the spaces  $L_{x_0}$ and $E_{x_0}$ by parallel transport with respect to the connections $\nabla^L$ and $\nabla^E$ along the curve $\gamma_Z : [0,1]\ni u \to \exp^X_{x_0}(uZ)$. Denote by $\nabla^{L^p\otimes E}$ and $h^{L^p\otimes E}$ the connection and the Hermitian metric on the trivial bundle with fiber $(L^p\otimes E)_{x_0}$ induced by these trivializations.  

Let $dv_{TX}$ denote the Riemannian volume form of the Euclidean space $(T_{x_0}X, g_{x_0})$. We define a smooth function $\kappa$ on 
\[
B^{T_{x_0}X}(0,a^X)\cong B^{X}(x_0,a^X)
\]
by the equation
\[
dv_{X}(Z)=\kappa(Z)dv_{TX}(Z), \quad Z\in B^{T_{x_0}X}(0,a^X). 
\] 

The almost complex structure $J_{x_0}$ induces a decomposition 
\[
T_{x_0}X\otimes_{\mathbb R}\mathbb C=T^{(1,0)}_{x_0}X\oplus T^{(0,1)}_{x_0}X,
\] 
where $T^{(1,0)}_{x_0}X$ and $T^{(0,1)}_{x_0}X$ are the eigenspaces of $J_{x_0}$ corresponding to eigenvalues $i$ and $-i$ respectively. Denote by $\det_{\mathbb C}$ the determinant function of the complex space $T^{(1,0)}_{x_0}X$. Put
\[
\mathcal J_{x_0}=-2\pi i J_0.
\]
Then $\mathcal J_{x_0} : T^{(1,0)}_{x_0}X\to T^{(1,0)}_{x_0}X$ is positive, and $\mathcal J_{x_0} : T_{x_0}X\to T_{x_0}X$ is skew-symmetric. We define a function $\mathcal P=\mathcal P_{x_0}\in C^\infty(T_{x_0}X\times T_{x_0}X)$ by
\begin{multline}\label{e:Bergman}
\mathcal P(Z, Z^\prime)\\=\frac{\det_{\mathbb C}\mathcal J_{x_0}}{(2\pi)^n}\exp\left(-\frac 14\langle (\mathcal J^2_{x_0})^{1/2}(Z-Z^\prime), (Z-Z^\prime)\rangle +\frac 12 \langle \mathcal J_{x_0} Z, Z^\prime \rangle \right).
\end{multline}
It is the Bergman kernel of the second order differential operator $\mathcal L_0$ on $C^\infty(T_{x_0}X,E_{x_0})$ given by
\begin{equation}\label{e:L0}
\mathcal L_0=-\sum_{j=1}^{2n} \left(\nabla_{e_j}+\frac 12 R^L_{x_0}(Z,e_j)\right)^2-\tau (x_0), 
\end{equation}
where $\{e_j\}_{j=1,\ldots,2n}$ is an orthonormal base in $T_{x_0}X$. 
Here, for $U\in T_{x_0}X$, we denote by $\nabla_U$ the ordinary operator of differentiation in the direction $U$ on the space $C^\infty(T_{x_0}X,E_{x_0})$. Thus, $\mathcal P$ is the smooth kernel (with respect to $dv_{TX}$) of the orthogonal projection in $L^2(T_{x_0}X,E_{x_0})$ to the kernel $N$ of $\mathcal L_0$.

Consider the fiberwise product 
\[
TX\times_X TX=\{(Z,Z^\prime)\in T_{x_0}X\times T_{x_0}X : x_0\in X\}.
\] 
Let $\pi : TX\times_X TX\to X$ be the natural projection given by  $\pi(Z,Z^\prime)=x_0$. The kernel $P_{q,p}(x,x^\prime)$ induces a smooth section $P_{q,p,x_0}(Z,Z^\prime)$ of the vector bundle $\pi^*(\operatorname{End}(E))$ on $TX\times_X TX$ defined for all $x_0\in X$ and $Z,Z^\prime\in T_{x_0}X$ with $|Z|, |Z^\prime|<a_X$. 

The main result of the paper is the following theorem, which states the existence of the full off-diagonal asymptotic expansion of the generalized Bergman kernel $P_{q,p}$ as $p\to \infty$. 

\begin{thm}\label{t:main}
There exists $\varepsilon\in (0,a^X)$ such that, for any $j,m,m^\prime\in \mathbb N$, $j\geq 2q$, there exist positive constants $C$, $c$ and $M$ such that for any $p\geq 1$, $x_0\in X$ and $Z,Z^\prime\in T_{x_0}X$, $|Z|, |Z^\prime|<\varepsilon$, we have
\begin{multline}
\sup_{|\alpha|+|\alpha^\prime|\leq m}\Bigg|\frac{\partial^{|\alpha|+|\alpha^\prime|}}{\partial Z^\alpha\partial Z^{\prime\alpha^\prime}}\Bigg(\frac{1}{p^n}P_{q,p,x_0}(Z,Z^\prime)\\ 
-\sum_{r=2q}^jF_{q,r,x_0}(\sqrt{p} Z, \sqrt{p}Z^\prime)\kappa^{-\frac 12}(Z)\kappa^{-\frac 12}(Z^\prime)p^{-\frac{r}{2}+q}\Bigg)\Bigg|_{\mathcal C^{m^\prime}(X)}\\ 
\leq Cp^{-\frac{j-m+1}{2}+q}(1+\sqrt{p}|Z|+\sqrt{p}|Z^\prime|)^M\exp(-c\sqrt{\mu_0p}|Z-Z^\prime|)+\mathcal O(p^{-\infty}),
\end{multline}
where
\[
F_{q,r,x_0}(Z,Z^\prime)=J_{q,r,x_0}(Z,Z^\prime)\mathcal P_{x_0}(Z,Z^\prime),
\]
the $J_{q,r,x_0}(Z,Z^\prime)$ are polynomials in $Z, Z^\prime$, depending smoothly on $x_0$, with the same parity as $r$ and $\operatorname{deg} J_{q,r,x_0}\leq 3r$.
\end{thm}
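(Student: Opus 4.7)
My plan is to follow the analytic localization/rescaling machinery of Ma--Marinescu (as cited in \cite{ma-ma08}), adapting it to the projection $(\Delta_p)^q P_{\mathcal H_p}$ rather than $P_{\mathcal H_p}$ alone. The starting point is the spectral gap: $\sigma(\Delta_p)\subset[-C_L,C_L]\cup[2p\mu_0-C_L,\infty)$ lets me write
\[
(\Delta_p)^qP_{\mathcal H_p}=\frac{1}{2\pi i}\oint_{|\lambda|=2C_L}\lambda^q(\lambda-\Delta_p)^{-1}\,d\lambda,
\]
so the whole problem reduces to a contour-integral analysis of the resolvent of $\Delta_p$. The exponential off-diagonal decay $\mathcal O(p^{-\infty})$ outside any fixed neighborhood of the diagonal (which is already recalled from \cite{ma-ma08} and proved by finite-propagation-speed / Sobolev arguments) lets me localize the entire problem to a ball of radius $\varepsilon<a^X$ around an arbitrary $x_0\in X$.

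\textbf{Rescaling and model operator.} Working in the trivialization over $B^X(x_0,a^X)$, I introduce the rescaling $S_t u(Z)=u(Z/t)$ with $t=1/\sqrt p$, and form the rescaled operator
\[
\mathcal L_t=t^2 S_t^{-1}\kappa^{1/2}\Delta_p\kappa^{-1/2}S_t
\]
acting on sections over $T_{x_0}X$ (after a cutoff and extension to all of $T_{x_0}X$). A Taylor expansion in normal coordinates of the metric, curvature, and connection coefficients gives a formal expansion $\mathcal L_t=\mathcal L_0+\sum_{r\geq 1}t^r\mathcal O_r$, where $\mathcal L_0$ is exactly the Landau-type operator in \eqref{e:L0} and each $\mathcal O_r$ is a second-order differential operator whose coefficients are polynomials in $Z$ of degree $\leq r+2$. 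This is precisely the expansion of the conjugated/rescaled generalized Bochner Laplacian computed in \cite{ma-ma08}; the renormalization by $\tau$ ensures that the zero-order part of $\mathcal L_0$ is the one that makes $\mathcal P_{x_0}$ its Bergman projector.

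\textbf{Resolvent expansion and extraction of $F_{q,r,x_0}$.} I insert the expansion of $\mathcal L_t$ into the contour formula. Writing $(\lambda-\mathcal L_t)^{-1}$ as a Neumann-type series
\[
(\lambda-\mathcal L_t)^{-1}=(\lambda-\mathcal L_0)^{-1}+\sum_{r\geq 1}t^r\sum_{r_1+\cdots+r_k=r}(\lambda-\mathcal L_0)^{-1}\mathcal O_{r_1}(\lambda-\mathcal L_0)^{-1}\cdots\mathcal O_{r_k}(\lambda-\mathcal L_0)^{-1},
\]
and evaluating $\tfrac{1}{2\pi i}\oint\lambda^q(\,\cdot\,)\,d\lambda$, I obtain operators $\mathcal F_{q,r}$ on $T_{x_0}X$ whose Schwartz kernels $F_{q,r,x_0}(Z,Z')$ I claim are the coefficients of the expansion. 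Since $\mathcal P_{x_0}$ is the orthogonal projection onto $\Ker\mathcal L_0$ and each $\mathcal O_{r_i}$ has polynomial coefficients, a direct induction (using the explicit Mehler-type formula for $(\lambda-\mathcal L_0)^{-1}$ and the fact that $\mathcal P_{x_0}$ absorbs polynomial factors while preserving Gaussian form) shows that $\mathcal F_{q,r}$ has kernel $J_{q,r,x_0}(Z,Z')\mathcal P_{x_0}(Z,Z')$ with $J_{q,r,x_0}$ a polynomial. The parity statement follows because changing $Z,Z'\mapsto -Z,-Z'$ flips the sign of each $\mathcal O_r$-coefficient exactly $r$ times, and the degree bound $\deg J_{q,r,x_0}\leq 3r$ is tracked by observing that each insertion of $\mathcal O_{r_i}$ can raise the polynomial degree by at most $3r_i$ (two from the coefficients of $\mathcal O_{r_i}$, one from each resolvent application on the Gaussian), and $r\geq 2q$ arises because expanding $\lambda^q$ around $\lambda=0$ combined with $\mathcal L_0\mathcal P_{x_0}=0$ forces the first $2q$ terms to vanish.

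\textbf{Uniform Sobolev estimates and Gaussian off-diagonal decay.} The main technical obstacle, as usual in this framework, is upgrading the formal expansion to an honest asymptotic expansion with the precise weighted, exponentially decaying remainder $(1+\sqrt p|Z|+\sqrt p|Z'|)^M e^{-c\sqrt{\mu_0p}|Z-Z'|}$. The plan is to establish uniform-in-$t$ Sobolev estimates for the resolvent $(\lambda-\mathcal L_t)^{-1}$ on weighted Sobolev spaces on $T_{x_0}X$, in the style of Ma--Marinescu: a Garding-type inequality for $\mathcal L_t$ using $\mu_0>0$, combined with elliptic regularity in Sobolev norms weighted by $e^{c|Z|/t}$, yields the exponential factor with rate governed by $\sqrt{\mu_0}$. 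Iterating the Neumann expansion up to order $j$ and using these weighted bounds on the remainder produces the claimed error estimate, uniformly in $x_0\in X$ (smoothness in $x_0$ is obtained by differentiating the construction in parameters, adding further $\mathcal C^{m'}$-derivatives on $x_0$ through the coefficients of $\mathcal O_r$). Undoing the conjugation by $\kappa^{1/2}$ and the rescaling $S_t$ produces the factors $\kappa^{-1/2}(Z)\kappa^{-1/2}(Z')$ and the powers $p^{-r/2+q}$ in the statement, completing the proof.
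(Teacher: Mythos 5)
Your overall skeleton matches the paper's: localize near the diagonal to a model operator on $T_{x_0}X$, rescale by $S_t$ with $t=1/\sqrt p$, express $(\mathcal L_t)^q\mathcal P_{0,t}$ as a contour integral of $\lambda^q(\lambda-\mathcal L_t)^{-1}$, extract the coefficients $F_{q,r,x_0}$ from the expansion of $\mathcal L_t$ in $t$, and then push weighted Sobolev estimates through a Sobolev embedding to get pointwise kernel bounds. Whether you organize the expansion as a Neumann-type series in $t$ or, as the paper does, as a Taylor formula with integral remainder
\[
\mathcal P_{q,t}-\sum_{r=0}^{j}\tfrac{1}{r!}F_{q,r}t^r=\tfrac{1}{j!}\int_0^t(t-\tau)^j\tfrac{\partial^{j+1}\mathcal P_{q,t}}{\partial t^{j+1}}(\tau)\,d\tau
\]
is a presentational choice; both lead to the same coefficients.

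The genuine gap is in the weighted estimates, which are the whole point of this theorem over the near-diagonal result of Ma--Marinescu. You propose working in Sobolev spaces weighted by $e^{c|Z|/t}$. That weight is centered at the origin of $T_{x_0}X$, so at best it yields decay of the kernel in $|Z|$ (resp.\ $|Z'|$), not in $|Z-Z'|$. Trying to convert decay in $|Z|$ into decay in $|Z-Z'|$ by the triangle inequality produces a parasitic factor $e^{c\sqrt p\,|Z'|}$ that is exponentially large, so the estimate is useless for $Z'\neq 0$. What the paper actually does is introduce a \emph{family} of weights $e^{a f_W}$ with $f_W(Z)=(1+|Z-W|^2)^{1/2}$ parametrized by the shift center $W\in\mathbb R^{2n}$, prove that the conjugated operators $\mathcal L_{t,a,W}=e^{af_W}\mathcal L_te^{-af_W}$ satisfy resolvent and commutator estimates \emph{uniformly in $W$} (using that $\nabla f_W$ and all higher derivatives of $f_W$ are uniformly bounded, independent of $W$), carry out all the Bergman-projection and remainder estimates uniformly in $W$ for $|a|<c\sqrt{\mu_0}$, and only at the very end set $W=Z'$. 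It is precisely this freedom to move the weight center with $Z'$, together with the Sobolev embedding, that produces the factor $e^{-c\sqrt{\mu_0p}\,|Z-Z'|}$ in the statement. Your proposal gives no mechanism for this, and since the uniformity in $W$ is nontrivial (the operator $\mathcal L_{t,a,W}$ acquires first-order perturbation terms $aA_W+a^2B_W$ that must be absorbed by the spectral gap), this is not a cosmetic omission but the central technical step of the proof.

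A secondary but real issue: the localization step is not proved by a separate argument in this paper; it is quoted as Proposition 1.3 of Ma--Marinescu (where it is obtained from the spectral gap and finite propagation speed). Your proof sketch treats the localization as routine, which is fine, but you should cite it rather than imply you are re-deriving it. The claims about the structure, parity and degree of $J_{q,r,x_0}$, and the vanishing $F_{q,r}=0$ for $q>0$, $r<2q$, are likewise taken over from Ma--Marinescu (Theorem 1.18 there); your heuristic for the degree bound ("one from each resolvent application on the Gaussian") is not an argument, but this part the paper also imports rather than re-proves.
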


Here $\mathcal C^{m^\prime}(X)$ is the $\mathcal C^{m^\prime}$-norm for the parameter $x_0\in X$. We say that $G_p=\mathcal O(p^{-\infty})$ if for any $l, l_1\in \mathbb N$, there exists $C_{l,l_1}>0$ such that $\mathcal C^{l_1}$-norm of $G_p$ is estimated from above by $C_{l,l_1}p^{-l}$. 

The full off-diagonal asymptotic expansion of the Bergman kernel of the spin$^c$ Dirac operator associated to a positive line bundle on a compact symplectic manifold was proved by X. Dai, K. Liu and X. Ma \cite[Theorem 4.18']{dai-liu-ma} (see also \cite[Theorem 4.2.1]{ma-ma:book}). Their approach is inspired by the local index theory, especially by the analytic techniques of Bismut and Lebeau. In that case, it is very important that the eigenvalues of the associated Laplacian are either $0$ or tend to $+\infty$. 

In the current situation, the renormalized Bochner-Laplacian possibly have different bounded eigenvalues. Nevertheless, in \cite{ma-ma08}, X. Ma and G. Marinescu  developed the method to obtain a weaker result, a near diagonal asymptotic expansion of the generalized Bergman kernels of the renormalized Bochner Laplacian \cite[Theorem 1.19]{ma-ma08} (see also \cite[Theorem 4.1.24]{ma-ma:book}), which turned out to be sufficient for many applications. The paper \cite{ma-ma08} also contains computations of some coefficients $F_{q,r,x_0}$.  

In this paper, we modify the technique of Ma and Marinescu to prove the full off-diagonal asymptotic expansion of the generalized Bergman kernels of the renormalized Bochner Laplacian. We follow the strategy of \cite{dai-liu-ma,ma-ma08}. So the first step is localization of the problem in a neighborhood of the diagonal. Then we rescale the operator in the normal coordinates introduced above and obtain its formal asymptotic expansion as $p\to \infty$. Finally, we use the Riesz formula, the formal power series technique, Sobolev norm estimates and Sobolev embedding theorems. The most essential improvement which allows us to extend the domain of validity of asymptotic expansions is the use of weighted Sobolev spaces and weighted estimates. Here we apply the technique that was used earlier in \cite{Kor91,Kor00,Meladze-Shubin1,Meladze-Shubin2} to prove pointwise estimates for the kernels of functions of elliptic differential operators on noncompact manifolds. 

As an immediate application of Theorem~\ref{t:main}, we construct the algebra of Toeplitz operators on the symplectic manifold $X$ associated with the renormalized Bochner Laplacian. Actually, once we prove the full off-diagonal asymptotic expansion of the generalized Bergman kernels, we can easily get a characterization of Toeplitz operators and prove that these operators form an algebra, following the arguments of \cite{ma-ma08a}. In the process of preparation of this paper, X. Ma and G. Marinescu informed me about the preprint \cite{ioos-lu-ma-ma}, where they constructed the algebra of Toeplitz operators associated with the renormalized Bochner Laplacian, by using asymptotic expansions of the generalized Bergman kernels in two complimentary domains covering the manifold \cite[(2.5) and Theorem 2.1]{ioos-lu-ma-ma}  (see also \cite{lu-ma-ma}). These expansions are stronger than the near diagonal expansions proved in \cite{ma-ma08}, but weaker than the full off-diagonal ones established in the present paper. 

The paper is organized as follows. In \S\ref{local} we recall the results of \cite[Sections 1.1 and 1.2]{ma-ma08} on the localization and the rescaling of the problem which allow us to obtain a formal asymptotic expansion of the renormalized Bochner-Laplacian as $p\to \infty$. In \S\ref{norm}, we derive the weighted norm estimates for the resolvent of the renormalized Bochner-Laplacian $\mathcal L_t$. In \S\ref{norm-Bergman}, we derive the weighted norm estimates for the generalized Bergman projection $\mathcal P_{q,t}$ associated with the operator $\mathcal L_t$ and its derivatives of an arbitrary order with respect to $t$. In \S\ref{asymp}, we first use the estimates of Section~\ref{norm-Bergman} to derive the weighted norm estimates for the remainders in the asymptotic formula for the generalized Bergman projection $\mathcal P_{q,t}$. Then Sobolev embedding theorem allows us to obtain pointwise estimates of the remainders in the asymptotic formula for the generalized Bergman kernels. Finally, writing these pointwise estimates in the initial coordinates, we complete the proof of the main theorem. \S\ref{Toeplitz} is devoted to Toeplitz operators. 

The author is grateful to X. Ma and G. Marinescu for useful discussions. 

 \section{Localization and rescaling of the problem}\label{local}
In this section, we recall the results of \cite[Sections 1.1 and 1.2]{ma-ma08} on the localization and rescaling of the problem, which allow us to obtain a formal asymptotic expansion of the renormalized Bochner Laplacian as $p\to \infty$. 

We will keep the notation introduced in \S\ref{s:intro}. We fix $x_0\in X$. Let $\{e_j\}$ be an oriented orthonormal basis of $T_{x_0}X$. It gives rise to an isomorphism $X_0:=\mathbb R^{2n}\cong T_{x_0}X$.  Consider the trivial bundles $L_0$ and $E_0$ with fibers $L_{x_0}$ and $E_{x_0}$ on $X_0$. Recall that we have the Riemannian metric $g$ on $B^{T_{x_0}X}(0,a^X)$ as well as the connections $\nabla^L$, $\nabla^E$ and the Hermitian metrics $h^L$, $h^E$ on the restrictions of $L_0$ and $E_0$ to $B^{T_{x_0}X}(0,a^X)$ induced by the identification $B^{T_{x_0}X}(0,a^X)\cong B^{X}(x_0,a^X)$. In particular, $h^L$, $h^E$ are the constant metrics $h^{L_0}=h^{L_{x_0}}$, $h^{E_0}=h^{E_{x_0}}$. For $\varepsilon \in (0,a^X/4)$, one can extend these geometric objects from $B^{T_{x_0}X}(0,\varepsilon)$ to $\mathbb R^{2n}\cong T_{x_0}X$ in the following way.  

Let $\rho : \mathbb R\to [0,1]$ be a smooth even function such that $\rho(v)=1$ if $|v|<2$ and $\rho(v)=0$ if $|v|>4$. Let $\varphi_\varepsilon : \mathbb R^{2n}\to \mathbb R^{2n}$ be the map defined by $\varphi_\varepsilon(Z)=\rho(|Z|/\varepsilon)Z$. We equip $X_0$ with the metric $g_0(Z)=g(\varphi_\varepsilon(Z))$. Set $\nabla^{E_0}=\varphi^*_\varepsilon\nabla^E$. Define  the Hermitian connection $\nabla^{L_0}$ on $(L_0,h^{L_0})$ by 
\[
\nabla^{L_0}(Z)=\varphi_\varepsilon^*\nabla^L+\frac 12(1-\rho^2(|Z|/\varepsilon))R^L_{x_0}(\mathcal R,\cdot),
\]
where $\mathcal R(Z)=\sum_j Z_je_j\in \mathbb R^{2n}\cong T_ZX_0$. 

One can show that, for $\varepsilon$ small enough, the curvature $R^{L_0}$ is positive and satisfies the following estimate for any $x_0\in X$,
\[
\inf_{u\in TX}\frac{iR^{L_0}(u,J^{L_0}u)}{|u|_g^2}\geq (1-\alpha)\mu_0. 
\]
From now on, we fix such an $\varepsilon>0$. 

We also extend the function $\kappa$ from $B^{T_{x_0}X}(0,\varepsilon)$ to $X_0$. Let $dv_{X_0}$ be the Riemannian volume form of $(X_0, g_0)$. Then $\kappa$ is the smooth positive function on $X_0$ defined by the equation
\[
dv_{X_0}(Z)=\kappa(Z)dv_{TX}(Z), \quad Z\in X_0. 
\] 

Let $\Delta_p^{X_0}=\Delta^{L_0^p\otimes E_0}-p\tau_0$ be the associated renormalized Bochner Laplacian acting on $C^\infty(X_0,L_0^p\otimes E_0)$. Then (cf. \cite[Equation (1.23)]{ma-ma08}) there exists a constant $C_{L_0}>0$ such that for any $p$ we have
 \begin{equation}\label{gap0}
 \sigma(\Delta_p^{X_0})\subset [-C_{L_0},C_{L_0}]\cup [2(1-\alpha)p\mu_0-C_{L_0},+\infty).
 \end{equation}
 
Consider the subspace $\mathcal H^0_p$ in $C^\infty(X_0,L_0^p\otimes E_0)\cong C^\infty(\mathbb R^{2n}, E_{x_0})$ spanned by the eigensections of $\Delta^{X_0}_p$ corresponding to eigenvalues in $[-C_{L_0},C_{L_0}]$. Let $P_{\mathcal H^0_p}$ be the orthogonal projection onto $\mathcal H^0_p$. The smooth kernel of $(\Delta^{X_0}_p)^qP_{\mathcal H^0_p}$ with respect to the Riemannian volume form $dv_{X_0}$ is denoted by $P^0_{q,p}(Z,Z^\prime)$. The kernels $P_{q,p,x_0}(Z,Z^\prime)$ and $P^0_{q,p}(Z,Z^\prime)$ are asymptotically close on $B^{T_{x_0}X}(0,\varepsilon)$ in the $\mathcal C^\infty$-topology, as $p\to \infty$. 

\begin{prop}[\cite{ma-ma08}, Proposition 1.3]\label{prop13}
For any $l,m\in \mathbb N$, there exists $C_{l,m}>0$ such that for $x_0\in X$  and $x,x^\prime\in B^{T_{x_0}X}(0,\varepsilon)$, we have 
 \[
|P_{q,p,x_0}(Z,Z^\prime)-P^0_{q,p}(Z,Z^\prime)|_{\mathcal C^m}\leq C_{l,m}p^{-l}.  
\]
 \end{prop}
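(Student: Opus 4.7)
Plan. The idea is to combine the two spectral gaps (the one preceding the definition of $\mathcal H_p$ for $\Delta_p$, and \eqref{gap0} for $\Delta^{X_0}_p$) with the fact that, by construction, $\Delta_p$ and $\Delta^{X_0}_p$ coincide as differential operators on sections supported in $B^{T_{x_0}X}(0,\varepsilon)$, and to carry out a standard localization via finite propagation speed of the wave equation, in the spirit of \cite{ma-ma08}. Choose $\phi\in C^\infty_c(\mathbb R)$ with $\phi(\lambda)=\lambda^q$ on $[-C,C]$ for $C=\max(C_L,C_{L_0})$ and $\supp\phi\subset[-2C,2C]$. Then for $p$ large enough, the spectral gaps force
\[
\phi(\Delta_p)=(\Delta_p)^qP_{\mathcal H_p},\qquad \phi(\Delta^{X_0}_p)=(\Delta^{X_0}_p)^qP_{\mathcal H^0_p},
\]
so it suffices to bound the $\mathcal C^m$-norm of $\phi(\Delta_p)(Z,Z')-\phi(\Delta^{X_0}_p)(Z,Z')$ on $B^{T_{x_0}X}(0,\varepsilon)\times B^{T_{x_0}X}(0,\varepsilon)$.

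Next, shift both operators by a positive constant $K$ to make them nonnegative, and set $\tilde\phi(\mu)=\phi(\mu^2-K)$, which is even and Schwartz. Using the Fourier representation
\[
\tilde\phi\bigl(\sqrt{\Delta_p+K}\bigr)=\frac{1}{\pi}\int_0^\infty\hat{\tilde\phi}(s)\cos\bigl(s\sqrt{\Delta_p+K}\bigr)\,ds
\]
(and the analogous formula on $X_0$), pick a cutoff $\chi\in C^\infty_c((-\varepsilon_0,\varepsilon_0))$ with $\chi\equiv 1$ near $0$ and $\varepsilon_0$ small enough compared to $\varepsilon$, and split $\tilde\phi=\tilde\phi_1+\tilde\phi_2$ by prescribing the Fourier transform of $\tilde\phi_1$ to be $\chi\hat{\tilde\phi}$. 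By finite propagation speed of $\cos(s\sqrt{\Delta_p+K})$, the smooth kernel of $\tilde\phi_1(\sqrt{\Delta_p+K})$ at $Z,Z'\in B^{T_{x_0}X}(0,\varepsilon)$ only depends on $\Delta_p$ in $B^{T_{x_0}X}(0,\varepsilon+\varepsilon_0)$, where it agrees with $\Delta^{X_0}_p$; hence the two $\tilde\phi_1$-kernels agree identically on $B^{T_{x_0}X}(0,\varepsilon)\times B^{T_{x_0}X}(0,\varepsilon)$.

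It remains to show that the kernels of $\tilde\phi_2(\sqrt{\Delta_p+K})$ and $\tilde\phi_2(\sqrt{\Delta^{X_0}_p+K})$ are individually $\mathcal O(p^{-\infty})$ in $\mathcal C^m$. Since $\hat{\tilde\phi}$ is Schwartz and the Fourier transform of $\tilde\phi_2$ vanishes near $0$, repeated integration by parts yields $|\tilde\phi_2^{(\ell)}(\mu)|\le C_{\ell,N}(1+|\mu|)^{-N}$ for every $N$ and $\ell$. Combined with the elliptic Sobolev embedding for sections of $L^p\otimes E$ (whose norms defined via $(1+\Delta^{L^p\otimes E})^{s/2}$ introduce only polynomial growth in $p$) and with the spectral gap pushing the high part of the spectrum to $2p\mu_0-\mathcal O(1)$, this upgrades to the required $\mathcal O(p^{-\infty})$ estimate uniformly in $x_0\in X$. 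The main obstacle is precisely this last step: one must delicately balance the polynomial $p$-growth of the Sobolev embedding constants against the arbitrarily fast decay of $\tilde\phi_2$ and its derivatives by choosing $N$ sufficiently large compared to the desired $l$ and $m$, thereby extracting the $p^{-\infty}$ smallness.
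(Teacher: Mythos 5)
Your overall strategy --- pass to the wave group of $\sqrt{\Delta_p+K}$, split the spectral cutoff on the Fourier side, use finite propagation speed for the near piece, and treat the far piece via the spectral gap and Sobolev embedding --- is indeed the route of \cite{ma-ma08}. The gap is in the estimate of the far piece $\tilde\phi_2$.

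Because $\chi$ is a \emph{fixed} cutoff in $C^\infty_c((-\varepsilon_0,\varepsilon_0))$, the function $\tilde\phi_2=\tilde\phi-\tilde\phi_1$ is a fixed, $p$-independent Schwartz function. The bound $|\tilde\phi_2^{(\ell)}(\mu)|\le C_{\ell,N}(1+|\mu|)^{-N}$ gives decay as $|\mu|\to\infty$; it says nothing about the size of $\tilde\phi_2$ on the compact window $[\sqrt{K-C_L},\sqrt{K+C_L}]$, which is precisely where the bounded eigenvalues of $\sqrt{\Delta_p+K}$ sit. On that window $\tilde\phi_2$ is a fixed, generically nonzero function, so $\tilde\phi_2(\sqrt{\Delta_p+K})$ restricted to $\mathcal H_p$ has operator norm $\mathcal O(1)$ and a Schwartz kernel comparable to $p^n$ on the diagonal --- not $\mathcal O(p^{-\infty})$. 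Your remainder estimate handles only the high part of the spectrum (above $2p\mu_0-\mathcal O(1)$), where the Schwartz decay does yield $p^{-\infty}$; the low part is left unbounded. To close the gap the Fourier cutoff must scale with $p$. For instance, work with $\Delta_p/p$: the bounded eigenvalues collapse onto $\{0\}$, one may take $\phi$ a fixed bump with $\phi\equiv 1$ near $0$ to get $\phi(\Delta_p/p)=(\Delta_p/p)^qP_{\mathcal H_p}$, and the principal symbol of $\sqrt{\Delta_p/p+K}$ behaves like $|\xi|_g/\sqrt{p}$, so the wave group propagates at speed $1/\sqrt{p}$. One can then afford a Fourier cutoff $\chi_T$ supported in $(-T,T)$ with $T=T(p)\sim\sqrt{p}$: the near piece still propagates over a distance $T/\sqrt{p}=\mathcal O(1)<\varepsilon$ (so the two $\psi_1$-kernels still agree on $B^{T_{x_0}X}(0,\varepsilon)^2$), while the far piece satisfies $\|\psi_2\|_{C^k}\le C_{k,N}T^{-N}=\mathcal O(p^{-\infty})$ uniformly on the \emph{whole} spectrum, not just the high part. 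Combined with the polynomial-in-$p$ Sobolev cost you sketch, this gives the stated estimate; without a $p$-dependent cutoff the argument does not conclude.
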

 
Next, we use the rescaling introduced in \cite[Section 1.2]{ma-ma08}. 
Denote $t=\frac{1}{\sqrt{p}}$. For $s\in C^\infty(\mathbb R^{2n}, E_{x_0})$, set
\[
S_ts(Z)=s(Z/t), \quad Z\in \mathbb R^{2n}.
\]
The rescaled connection $\nabla_t$ is defined as 
\[
\nabla_t=tS^{-1}_t\kappa^{\frac 12}\nabla^{L_0^p}\kappa^{-\frac 12}S_t.
\]
Let $\Gamma^L$, $\Gamma^E$ be the connection forms of $\nabla^L$, $\nabla^E$ with respect to some fixed frames for $L$ and $E$ which are parallel along the curves $\gamma_Z : [0,1]\ni u \to \exp^X_{x_0}(uZ)$ under the chosen trivializations on $B^{T_{x_0}X}(0,\varepsilon)$. Then on $B^{T_{x_0}X}(0,\varepsilon/t)$, we have
\begin{align*}
\nabla_{t,e_i}&=\kappa^{\frac 12}(tZ)\left(\nabla_{e_i}+\frac{1}{t}\Gamma^L(e_i)(tZ)+t\Gamma^E(e_i)(tZ)\right)\kappa^{-\frac 12}(tZ)\\
&=\nabla_{e_i}+\frac{1}{t}\Gamma^L(e_i)(tZ)+t\Gamma^E(e_i)(tZ)-t\left(\kappa^{-1}(e_i\kappa)\right)(tZ).
\end{align*}

Recall that 
\[
\sum_{|\alpha|=r}(\partial^\alpha\Gamma^L)_{x_0}(e_j)\frac{Z^\alpha}{\alpha!}=\frac{1}{r+1}\sum_{|\alpha|=r-1}(\partial^\alpha  R^L)_{x_0}(\mathcal R,e_j)\frac{Z^\alpha}{\alpha!}.
\]
In particular, 
\[
\Gamma^L(e_j)(Z)=\frac{1}{2}R^L_{x_0}(\mathcal R,e_j)+O(|Z|^2). 
\]
Similar identities are valid for $\Gamma^E$. 

The rescaled operator $\mathcal L_t$ is defined to be
\begin{equation}\label{scaling}
\mathcal L_t=t^2S^{-1}_t\kappa^{\frac 12}\Delta^{X_0}_p\kappa^{-\frac 12}S_t.
\end{equation}
We have
\[
\mathcal L_t=-g^{jk}(tZ)\left[\nabla_{t,e_j}\nabla_{t,e_k}- t\Gamma^{\ell}_{jk}(tZ)\nabla_{t,e_\ell}\right]-\tau(tZ).
\] 
By \eqref{gap0}, it follows that
 \[
 \sigma(\mathcal L_t)\subset [-C_{L_0}t^2,C_{L_0}t^2]\cup [2(1-2\alpha)\mu_0-C_{L_0},+\infty).
 \]
for sufficiently small $t$. 
Observe that the operators $\nabla_{t,e_i}$ and $\mathcal L_t$ depend smoothly on $t$ up to $t=0$. Their limits as $t\to 0$ are the operators
\[
\nabla_{0,e_i}=\nabla_{e_i}+\frac{1}{2}R^L_{x_0}(\mathcal R, e_i)
\]
and $\mathcal L_0$ given by \eqref{e:L0}. The spectrum of $\mathcal L_0$ consists of a discrete set of eigenvalues of infinite multiplicity (see, for instance, \cite[Theorem 1.15]{ma-ma08}). In particular, 
 \[
 \sigma(\mathcal L_0)\subset \{0\}\cup [2(1-2\alpha)\mu_0-C_{L_0},+\infty).
 \]

One can develop the rescaled operator $\mathcal L_t$ in a Taylor series in $t$. For the resulting asymptotic expansion, we refer the reader to \cite[Theorem 1.4]{ma-ma08}.

\section{Norm estimates of the resolvent}\label{norm}
The next step is the norm estimates. In this section, we derive the weighted norm estimates for the resolvent of the operator $\mathcal L_t$. 
First, we recall and slightly modify the results of \cite[Section 1.3]{ma-ma08}.

Denote by $C^\infty_b(\mathbb R^{2n},E_{x_0})$ the space of smooth functions on $\mathbb R^{2n}$ with values in $E_{x_0}$ whose derivatives of any order are uniformly bounded in $\mathbb R^{2n}$. So $a\in C^\infty_b(\mathbb R^{2n},E_{x_0})$ if, for any $\alpha\in \mathbb Z_+^{2n}$, we have  
\[
\sup_{Z\in \mathbb R^{2n}}|\nabla^{\alpha_1}_{e_1}\ldots \nabla^{\alpha_{2n}}_{e_{2n}} a(Z)| <\infty.
\]

For $m\in \mathbb N$ and $t>0$, let $\mathcal Q^m_t$ be the set of linear combinations of operators of the form $\nabla_{t,e_{i_1}}\ldots \nabla_{t,e_{i_j}}$, $j\leq m$, with coefficients from $C^\infty_b(\mathbb R^{2n},E_{x_0})$. It is easy to see that $\mathcal Q^m_t$ is independent of $t$, therefore, we will omit $t$ in the notation: $\mathcal Q^m_t=\mathcal Q^m$. Observe that, if $Q$ is in $\mathcal Q^m$, then the adjoint $Q^*$ is in $\mathcal Q^m$.

For $s\in C^\infty_c(\mathbb R^{2n},E_{x_0})$ set 
\[
\|s\|^2_{t,0}=\|s\|^2_{0}=\int_{\mathbb R^{2n}}|s(Z)|^2dv_{TX}(Z).
\]
and, for any $m\in \mathbb N$ and $t>0$,
\[
\|s\|^2_{t,m}=\sum_{\ell=0}^m\sum_{j_1,\ldots,j_\ell=1}^{2n}\|\nabla_{t,e_{j_1}}\cdots \nabla_{t,e_{j_\ell}}s\|^2_{t,0}
\]
Let $\langle\cdot,\cdot\rangle_{t,m}$ denote the inner product on $C^\infty(\mathbb R^{2n},E_{x_0})$ corresponding to $\|\cdot\|^2_{t,m}$. Let $H^m_t$ be the Sobolev space of order $m$ with norm $\|\cdot\|_{t,m}$. For any integer $m<0$, we define the Sobolev space $H^{m}_t$ by duality. It is easy to see that, for different $t_1$ and $t_2$, the norms $\|\cdot\|_{t_1,m}$ and $\|\cdot\|_{t_2,m}$ are equivalent, uniformly in $t_1, t_2\in [0,1]$.
For any bounded linear operator $A: H^m_t\to H^{m^\prime}_t$ with $m,m^\prime\in \mathbb Z$, we denote by $\|A\|^{m,m^\prime}_t$ its norm with respect to $\|\cdot\|_{t,m}$ and $\|\cdot\|_{t,m^\prime}$.   

Let $\delta$ be the circle in $\mathbb C$ oriented counterclockwise, centered at $0$ and of radius $c\mu_0$. The following theorem is a slight modification of \cite[Theorem 1.7]{ma-ma08}.

\begin{thm}\label{Thm1.7}
There exists $t_0>0$ such that the resolvent $(\lambda-\mathcal L_t)^{-1}$ exists for all $\lambda\in \delta$, $t\in [0,t_0]$. Moreover, there exists $C>0$ such that for all $\lambda\in \delta$, $t\in [0,t_0]$ and $x_0\in X$ we have 
\[
\|(\lambda-\mathcal L_t)^{-1}\|^{0,0}_t \leq C/\mu_0, \quad \|(\lambda-\mathcal L_t)^{-1}\|^{0,1}_t\leq C/\sqrt{\mu_0}, \quad \|(\lambda-\mathcal L_t)^{-1}\|^{-1,1}_t\leq C.
\]
\end{thm}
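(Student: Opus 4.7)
The plan is to deduce the three resolvent bounds from the spectral gap for $\mathcal L_t$ recorded in Section~\ref{local}, the self-adjointness of $\mathcal L_t$, and a G{\aa}rding-type inequality, following \cite[Theorem~1.7]{ma-ma08} and tracking the dependence on $\mu_0$. First, I would pick $c>0$ small and $t_0>0$ such that, for $t\in[0,t_0]$, the circle $\delta$ of radius $c\mu_0$ stays at distance comparable to $\mu_0$ from $\sigma(\mathcal L_t)\subset[-C_{L_0}t^2,C_{L_0}t^2]\cup[2(1-2\alpha)\mu_0-C_{L_0},+\infty)$. Self-adjointness and the spectral theorem then give $\|(\lambda-\mathcal L_t)^{-1}\|^{0,0}_t\leq 1/\mathrm{dist}(\lambda,\sigma(\mathcal L_t))\leq C/\mu_0$ for $\lambda\in\delta$.

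For the $(0,1)$-bound, the key ingredient is a G{\aa}rding inequality
\[
\|\nabla_t s\|^2_{t,0}\leq C_1\,\mathrm{Re}\langle\mathcal L_t s,s\rangle+C_2\|s\|^2_{t,0}
\]
valid uniformly for $s\in C^\infty_c(\mathbb R^{2n},E_{x_0})$, $t\in[0,t_0]$ and $x_0\in X$. This is obtained by integration by parts in
\[
\mathcal L_t=-g^{jk}(tZ)\bigl[\nabla_{t,e_j}\nabla_{t,e_k}-t\Gamma^{\ell}_{jk}(tZ)\nabla_{t,e_\ell}\bigr]-\tau(tZ),
\]
using the uniform positive definiteness of $g^{jk}(tZ)$ and uniform boundedness of $\Gamma^{\ell}_{jk}(tZ)$ and $\tau(tZ)$ on $X_0$. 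Setting $s=(\lambda-\mathcal L_t)^{-1}f$, writing $\mathrm{Re}\langle\mathcal L_t s,s\rangle=\mathrm{Re}\,\lambda\,\|s\|^2_{t,0}-\mathrm{Re}\langle f,s\rangle$, and using $|\lambda|=c\mu_0$ together with the $(0,0)$-bound $\|s\|_{t,0}\leq C\|f\|_{t,0}/\mu_0$, Cauchy--Schwarz yields the claimed $\|s\|_{t,1}\leq C\|f\|_{t,0}/\sqrt{\mu_0}$.

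The $(-1,1)$-bound follows by duality and one more application of G{\aa}rding. Since $\mathcal L_t$ is self-adjoint and $\bar\lambda\in\delta$, one has $\|(\lambda-\mathcal L_t)^{-1}\|^{-1,0}_t=\|(\bar\lambda-\mathcal L_t)^{-1}\|^{0,1}_t\leq C/\sqrt{\mu_0}$. Applying G{\aa}rding to $s=(\lambda-\mathcal L_t)^{-1}f$ with $f\in H^{-1}_t$ then gives
\[
\|\nabla_t s\|^2_{t,0}\leq C\bigl(|\lambda|\,\|s\|^2_{t,0}+\|f\|_{t,-1}\|s\|_{t,1}\bigr)+C\|s\|^2_{t,0},
\]
and the cross term $\|f\|_{t,-1}\|s\|_{t,1}$ is absorbed into the left-hand side by Young's inequality, leaving $\|s\|_{t,1}\leq C\|f\|_{t,-1}$.

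The main technical obstacle is verifying the G{\aa}rding inequality with constants uniform in $t\in[0,t_0]$ and $x_0\in X$. This uniformity rests on the fact that, by the construction of $X_0$ in Section~\ref{local}, the coefficients $g^{jk}(tZ)$, $\Gamma^{\ell}_{jk}(tZ)$ and $\tau(tZ)$ lie in $C^\infty_b(\mathbb R^{2n})$ uniformly in $x_0$ and $g^{jk}(tZ)$ remains uniformly positive definite, so all commutator and integration-by-parts errors are controlled by absolute constants independent of $t$ and $x_0$.
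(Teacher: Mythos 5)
Your proof is correct and takes essentially the same approach as the paper, which itself follows \cite[Theorem~1.7]{ma-ma08}: spectral theorem for the $(0,0)$-bound, the G\aa rding inequality applied to $s=(\lambda-\mathcal L_t)^{-1}f$ for the $(0,1)$-bound, and duality plus a further G\aa rding application for the $(-1,1)$-bound (for which the paper simply cites \cite{ma-ma08} rather than repeating the argument). You track the $\mu_0$-dependence more explicitly than the paper's terse proof, but the underlying structure is identical.
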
 

\begin{proof}
The first inequality follows by the spectral theorem. 
Next, we have 
\begin{multline*}
\|(\lambda-\mathcal L_t)^{-1}s\|^2_{t,1} \leq \frac{1}{C_1}(\langle \mathcal L_t(\lambda-\mathcal L_t)^{-1}s,(\lambda-\mathcal L_t)^{-1}s \rangle_{t,0}\\ +C_2\|(\lambda-\mathcal L_t)^{-1}s\|^2_{t,0})
\leq \frac{C_2}{\mu_0}\|s\|^2_{t,0}.
\end{multline*}
For the third inequality, we refer to the proof of \cite[Theorem 1.7]{ma-ma08}.
\end{proof}

Now we introduce weighted spaces. Consider a function $f\in C^\infty(\mathbb R^{2n})$ given by 
\[
f(Z)=(1+|Z|^2)^{1/2}, \quad Z\in \mathbb R^{2n}. 
\] 
An important point is that $f$ satisfies the estimates
\[
C_1 |Z|\leq f(Z)\leq C_2|Z|, \quad |Z|\geq 1,
\]
with some $C_1, C_2>0$, and, for any $\alpha\in \mathbb Z_+^{2n}$ with $|\alpha|>0$,  
\begin{equation}\label{uniformD}
\sup_{Z\in \mathbb R^{2n}}|\nabla^{\alpha_1}_{e_1}\ldots \nabla^{\alpha_{2n}}_{e_{2n}} f(Z)| <\infty.
\end{equation}
For any $a\in \mathbb R$, let $L^2_a$ be the weighted $L^2$-space in $\mathbb R^{2n}$ with the weight $e^{af}$:
\[
L^2_a=\{s : e^{af}s\in L^2(\mathbb R^{2n},E_{x_0})\}.
\]
A family $\{\|\cdot\|_{L^2_{a,W}}: W\in \mathbb R^{2n}\}$ of equivalent norms in $L^2_a$ is defined by
\[
\|s\|^2_{L^2_{a,W}}=\|e^{af_W}s\|^2_{0}=\int_{\mathbb R^{2n}}e^{2af_W(Z)}|s(Z)|^2dv_{TX}(Z),
\]
where, for any $W\in \mathbb R^{2n}$, the function $f_W\in C^\infty(\mathbb R^{2n})$ is given by 
\[
f_W(Z)=f(Z-W)=(1+|Z-W|^2)^{1/2}, \quad Z\in \mathbb R^{2n}.
\] 
We will denote by $L^2_{a,W}$ the space $L^2_{a}$ with the norm $\|\cdot\|_{L^2_{a,W}}$. Similarly, one can introduce weighted Sobolev spaces.

Since the multiplication operator by $e^{af_W}$ defines a unitary operator $e^{af_W}: L^2_{a,W}\to L^2$, any bounded operator $A$ in $L^2_{a,W}$ is unitarily equivalent to the operator $A_{a,W}=e^{af_W}Ae^{-af_W}$ in $L^2$. In the sequel, instead of working directly with the weighted spaces $L^2_{a,W}$, we will consider operator families of the form $\{e^{af_W}Ae^{-af_W} : W\in \mathbb R^{2n}\}$ and only at the very end switch to weighted estimates.  

First, we observe that 
\[
\nabla_{t,a,W;e_j}:=e^{af_W}\nabla_{t,e_j}e^{-af_W}=\nabla_{t,e_j}-a\nabla_{e_j}f_W.
\]
In particular, this immediately implies that, if $Q$ in $\mathcal Q^m$, then the operator $e^{af_W}Qe^{-af_W}$ is in $\mathcal Q^m$. Moreover, the family $\{e^{af_W}Qe^{-af_W} : W\in \mathbb R^{2n}\}$ is a bounded family of operators belonging to $\mathcal Q^m$.

Next, for the operator  $\mathcal L_{t,a,W}:=e^{af_W}\mathcal L_te^{-af_W}$, we obtain 
\begin{align}\label{LtaZ}
 \mathcal L_{t,a,W}=& -g^{jk}(tZ)\left[\nabla_{t,a,W;e_j}\nabla_{t,a,W;e_k}- t\Gamma^{\ell}_{jk}(tZ)\nabla_{t,a,e_\ell;W}\right]-\tau(tZ) \\
\label{Lta}
= & \mathcal L_t+aA_W+a^2B_W, 
\end{align}
where 
\begin{align*}
A_W=& -\sum_{j,k=1}^{2n}g^{jk}(tZ)(\nabla_{e_j}f_W\nabla_{t,e_k}+\nabla_{e_k}f_W\nabla_{t,e_j}+\nabla_{e_j}\nabla_{e_k}f_W),\\
B_W=& -\sum_{j,k=1}^{2n}g^{jk}(tZ)\nabla_{e_j}f_W \nabla_{e_k}f_W.
\end{align*}
In particular, 
 \begin{equation}\label{ReLta}
{\mathrm Re}\, \mathcal L_{t,a,W}=\mathcal L_t - a^2\sum_{j,k=1}^{2n} g^{jk}(tZ)\nabla_{e_j} f_W \nabla_{e_k}f_W=\mathcal L_t - a^2\|df_W\|_{g^{-1}(tZ)}^2. 
\end{equation}

We have the following extension of \cite[Theorem 1.6]{ma-ma08}. 
\begin{thm}
There exist constants $C_1, C_2, C_3>0$ such that for any $t\in [0,1]$, $a\in \mathbb R$, $W\in \mathbb R^{2n}$ and $s, s^\prime\in C^\infty_c(\mathbb R^{2n})$ we have
\[
{\mathrm Re}\, \langle \mathcal L_{t,a,W} s, s\rangle_{t,0}\geq C_1\|s\|^2_{t,1}-(C_2+C^\prime_2a^2)\|s\|^2_{t,0},
\]
\[
\left|\langle \mathcal L_{t,a,W} s, s^\prime\rangle_{t,0}\right|\leq C_3(\|s\|_{t,1}\|s^\prime\|_{t,1}+a^2\|s\|_{t,0}\|s^\prime\|_{t,0}).
\]  
\end{thm}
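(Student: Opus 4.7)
The plan is to use the explicit decomposition \eqref{Lta},
\begin{equation*}
\mathcal L_{t,a,W}=\mathcal L_t+aA_W+a^2B_W,
\end{equation*}
and reduce the weighted estimates to the unweighted bounds of \cite[Theorem~1.6]{ma-ma08}. The extra terms $aA_W$ and $a^2B_W$ are controlled by the uniform boundedness in $(W,Z)$ of the derivatives of $f_W$ recorded in \eqref{uniformD}; this uniformity, together with the uniform boundedness of the components $g^{jk}(tZ)$ for $t\in[0,1]$, is what ultimately produces $W$-independent constants.

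For the coercivity inequality, I apply the identity \eqref{ReLta},
\begin{equation*}
\mathrm{Re}\,\langle \mathcal L_{t,a,W}s,s\rangle_{t,0}=\langle \mathcal L_ts,s\rangle_{t,0}-a^2\int\|df_W\|_{g^{-1}(tZ)}^2|s(Z)|^2\,dv_{TX}(Z).
\end{equation*}
The unweighted coercivity bound from \cite[Theorem~1.6]{ma-ma08} gives $\langle\mathcal L_ts,s\rangle_{t,0}\geq C_1\|s\|^2_{t,1}-C_2\|s\|^2_{t,0}$, and the pointwise bound $\|df_W\|_{g^{-1}(tZ)}\leq C$, uniform in $W$ and $Z$ by \eqref{uniformD}, makes the correction at most $C_2'a^2\|s\|^2_{t,0}$. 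Summing yields the first claim.

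For the continuity inequality, the $\mathcal L_t$-summand contributes $C\|s\|_{t,1}\|s'\|_{t,1}$ by the unweighted upper bound of \cite[Theorem~1.6]{ma-ma08}, and the $a^2B_W$-summand contributes $Ca^2\|s\|_{t,0}\|s'\|_{t,0}$ directly since $B_W$ is multiplication by a uniformly bounded function. Decomposing $A_W=A_W^{(1)}+A_W^{(0)}$ into its first- and zeroth-order parts (both with uniformly bounded coefficients), the zeroth-order contribution is bounded by $Ca\|s\|_{t,0}\|s'\|_{t,0}\leq C\|s\|_{t,1}\|s'\|_{t,1}+Ca^2\|s\|_{t,0}\|s'\|_{t,0}$ via $Ca\leq C(1+a^2)$. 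The first-order contribution admits two dual bounds,
\begin{equation*}
|\langle aA_W^{(1)}s,s'\rangle_{t,0}|\leq Ca\|s\|_{t,1}\|s'\|_{t,0} \quad\text{and}\quad |\langle aA_W^{(1)}s,s'\rangle_{t,0}|\leq Ca\|s\|_{t,0}\|s'\|_{t,1},
\end{equation*}
the second obtained by integrating by parts on the factor $g^{jk}(tZ)\nabla_{e_j}f_W$ (whose derivatives are again uniformly bounded by \eqref{uniformD}). Taking the minimum of the two estimates, dominating it by their geometric mean, and finally applying AM--GM yields
\begin{equation*}
Ca\sqrt{\|s\|_{t,1}\|s'\|_{t,1}\cdot \|s\|_{t,0}\|s'\|_{t,0}}\leq \tfrac{C}{2}\bigl(\|s\|_{t,1}\|s'\|_{t,1}+a^2\|s\|_{t,0}\|s'\|_{t,0}\bigr),
\end{equation*}
which is in the desired form. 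The only nontrivial move is this last AM--GM step; otherwise the argument is bookkeeping that combines the unweighted Ma--Marinescu estimates with the $W$-uniformity from \eqref{uniformD}, and it is exactly the explicitly tracked $a$-dependence in \eqref{Lta} that removes any smallness assumption on $a$.
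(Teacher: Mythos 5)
Your proof is correct and follows essentially the same route as the paper's: both arguments rest on the decomposition $\mathcal L_{t,a,W}=\mathcal L_t+aA_W+a^2B_W$ from \eqref{Lta}, the explicit form \eqref{ReLta} of the real part, the uniformity in $(W,Z)$ of the derivatives of $f_W$ from \eqref{uniformD}, and the unweighted estimates of \cite[Theorem~1.6]{ma-ma08}. The one place where you add content beyond the paper is the treatment of the term $|a|\,|\langle A_W s,s'\rangle_{t,0}|$: the paper's proof simply asserts the final bound, whereas you spell out the needed step, namely the pair of dual first-order estimates $|\langle A_W^{(1)}s,s'\rangle_{t,0}|\leq C\|s\|_{t,1}\|s'\|_{t,0}$ and $|\langle A_W^{(1)}s,s'\rangle_{t,0}|\leq C\|s\|_{t,0}\|s'\|_{t,1}$ (the second via integration by parts), followed by the geometric-mean/AM--GM bound $|a|\sqrt{XY}\leq\tfrac12(X+a^2Y)$. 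This is exactly the argument the paper leaves implicit, so your write-up is a faithful and somewhat more detailed version of the paper's proof rather than a different one.
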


\begin{proof} Using \eqref{uniformD}, \eqref{Lta}, \eqref{ReLta}, and \cite[Theorem 1.6]{ma-ma08}, we get 
\[
{\mathrm Re}\, \langle \mathcal L_{t,a,W} s, s\rangle_{t,0}\geq \langle \mathcal L_{t} s, s\rangle_{t,0}-C^\prime_2a^2\|s\|^2_{t,0}\geq C_1\|s\|^2_{t,1}-(C_2+C^\prime_2a^2)\|s\|^2_{t,0}
\]
and 
\begin{align*}
\left|\langle \mathcal L_{t,a,W} s, s^\prime\rangle_{t,0}\right|\leq & \left|\langle \mathcal L_{t} s, s^\prime\rangle_{t,0}\right| + |a| \left|\langle A_W s, s^\prime\rangle_{t,0}\right| + a^2 \left|\langle B_W s, s^\prime\rangle_{t,0}\right|\\ \leq & C_3(\|s\|_{t,1}\|s^\prime\|_{t,1}+a^2\|s\|_{t,0}\|s^\prime\|_{t,0}).\qedhere
\end{align*}
\end{proof}

Now we extend Theorem~\ref{Thm1.7} to the operators $\mathcal L_{t,a,W}$.

\begin{thm}\label{Thm1.7W}
There exist $c>0$ and $C>0$ such that, for all $\lambda\in \delta$, $t\in [0,t_0]$, $|a|<c\sqrt{\mu_0}$, $W\in \mathbb R^{2n}$ and $x_0\in X$ the inverse operator $(\lambda-\mathcal L_{t,a,W})^{-1}$ exists and
\[
\|(\lambda-\mathcal L_{t,a,W})^{-1}\|^{0,0}_t\leq C/\mu_0,  \quad \|(\lambda-\mathcal L_{t,a,W})^{-1}\|^{-1,1}_t\leq C.
\]
\end{thm}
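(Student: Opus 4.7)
The plan is to treat $\mathcal L_{t,a,W}$ as a perturbation of $\mathcal L_t$ and invert $\lambda - \mathcal L_{t,a,W}$ by a Neumann series, leveraging the resolvent bounds of Theorem~\ref{Thm1.7}. By \eqref{Lta} the perturbation is $aA_W + a^2 B_W$, where $A_W$ is a first-order differential operator of the form (bounded coefficient)$\,\cdot\,\nabla_{t,e_k} + $ (bounded coefficient) and $B_W$ is multiplication by a bounded function. Crucially, by \eqref{uniformD} their coefficients and all derivatives are bounded \emph{uniformly} in $W \in \mathbb R^{2n}$, $t \in [0,1]$ and $x_0 \in X$, so $A_W \in \mathcal Q^1$, and $A_W : H^1_t \to H^0_t$, $B_W : H^0_t \to H^0_t$ are uniformly bounded. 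I would then start from the factorization
\[
\lambda - \mathcal L_{t,a,W} = (\lambda - \mathcal L_t)\bigl(I - (\lambda - \mathcal L_t)^{-1}(aA_W + a^2 B_W)\bigr).
\]

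The main obstacle is to show that the operator in parentheses has $H^0_t \to H^0_t$ norm at most $\tfrac12$ for \emph{all} $|a| < c\sqrt{\mu_0}$, rather than only for $|a|$ bounded by an absolute constant. The critical estimate is
\[
\|(\lambda - \mathcal L_t)^{-1} A_W\|^{0,0}_t \leq C/\sqrt{\mu_0};
\]
the naive composition $A_W : H^0_t \to H^{-1}_t$ with $\|(\lambda - \mathcal L_t)^{-1}\|^{-1,1}_t \leq C$ only gives a constant bound. The extra factor $1/\sqrt{\mu_0}$ must be extracted by duality: since $A_W^*$ is again a first-order operator of the same type, uniformly bounded $H^1_t \to H^0_t$, and $\mathcal L_t^*$ satisfies the same estimates as in Theorem~\ref{Thm1.7} by a parallel argument, one obtains
\[
\|(\lambda - \mathcal L_t)^{-1} A_W\|^{0,0}_t = \|A_W^*(\bar\lambda - \mathcal L_t^*)^{-1}\|^{0,0}_t \leq \|A_W^*\|^{1,0}_t \, \|(\bar\lambda - \mathcal L_t^*)^{-1}\|^{0,1}_t \leq C/\sqrt{\mu_0}.
\]
Combined with the elementary bound $\|(\lambda - \mathcal L_t)^{-1} B_W\|^{0,0}_t \leq C/\mu_0$, this yields
\[
\|(\lambda - \mathcal L_t)^{-1}(aA_W + a^2 B_W)\|^{0,0}_t \leq C\bigl(|a|/\sqrt{\mu_0} + a^2/\mu_0\bigr) \leq \tfrac12
\]
for $|a| < c\sqrt{\mu_0}$ with $c>0$ sufficiently small, so the Neumann series converges on $H^0_t$. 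Composing with $\|(\lambda - \mathcal L_t)^{-1}\|^{0,0}_t \leq C/\mu_0$ from Theorem~\ref{Thm1.7} then produces the first desired estimate.

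For the $H^{-1}_t \to H^1_t$ bound, I would set $u = (\lambda - \mathcal L_{t,a,W})^{-1} s$ and apply the resolvent identity
\[
u = (\lambda - \mathcal L_t)^{-1} s + (\lambda - \mathcal L_t)^{-1}(aA_W + a^2 B_W) u.
\]
The first term is controlled in $H^1_t$ by $\|(\lambda - \mathcal L_t)^{-1}\|^{-1,1}_t \|s\|_{t,-1} \leq C\|s\|_{t,-1}$. For the second, $(aA_W + a^2B_W)u$ lies in $H^0_t$ with norm at most $C(|a| + a^2)\|u\|_{t,1}$, and then $\|(\lambda - \mathcal L_t)^{-1}\|^{0,1}_t \leq C/\sqrt{\mu_0}$ produces precisely the factor absorbing the $\sqrt{\mu_0}$ in $|a|$. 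After shrinking $c$ if necessary, this term is dominated by $\tfrac12 \|u\|_{t,1}$, which absorbs on the left to yield $\|u\|_{t,1} \leq 2C\|s\|_{t,-1}$, completing the proof.
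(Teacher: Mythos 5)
Your proposal takes a genuinely different route. The paper uses the second resolvent identity
\[
(\lambda-\mathcal L_{t,a,W})^{-1}=(\lambda-\mathcal L_{t})^{-1}+(\lambda-\mathcal L_{t,a,W})^{-1}( \mathcal L_{t,a,W}- \mathcal L_{t})(\lambda-\mathcal L_t)^{-1},
\]
which places the perturbation $aA_W+a^2B_W$ to the \emph{left} of $(\lambda-\mathcal L_t)^{-1}$; since $(\lambda-\mathcal L_t)^{-1}$ lands in $H^1_t$ with norm $C/\sqrt{\mu_0}$, the first-order part $A_W$ sees $H^1_t$ directly and the $1/\sqrt{\mu_0}$ gain falls out with no duality. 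It then establishes, as a necessary intermediate step, the uniform bound $\|(\lambda-\mathcal L_{t,a,W})^{-1}\|^{0,1}_t\leq C/\sqrt{\mu_0}$, and composes this with $\|(\mathcal L_{t,a,W}-\mathcal L_t)(\lambda-\mathcal L_t)^{-1}\|^{-1,0}_t\leq C(a+a^2/\sqrt{\mu_0})$ to get the $(-1,1)$ bound. You instead factor $\lambda-\mathcal L_{t,a,W}=(\lambda-\mathcal L_t)\bigl(I-(\lambda-\mathcal L_t)^{-1}(aA_W+a^2B_W)\bigr)$ and run a Neumann series, which forces the perturbation to the \emph{right} of the resolvent; your duality argument $\|(\lambda-\mathcal L_t)^{-1}A_W\|^{0,0}_t=\|A_W^*(\bar\lambda-\mathcal L_t)^{-1}\|^{0,0}_t$ is a correct and clean workaround, and your proof of the $(0,0)$ bound is complete and uniform in $\mu_0$.

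There is, however, a gap in your $(-1,1)$ absorption argument. You bound $\|a^2 B_W u\|_{t,0}\leq Ca^2\|u\|_{t,1}$ and compose with $\|(\lambda-\mathcal L_t)^{-1}\|^{0,1}_t\leq C/\sqrt{\mu_0}$, so the contribution of the $B_W$ term to your absorption constant is of size $C^2 a^2/\sqrt{\mu_0}$. With $|a|<c\sqrt{\mu_0}$ this is $\sim c^2\sqrt{\mu_0}$, which is \emph{not} uniformly small as $\mu_0$ grows; the ``shrink $c$'' remedy then makes $c$ depend on $\mu_0$, destroying exactly the uniform dependence that is the point of tracking $\mu_0$ throughout (and that feeds into the $\exp(-c\sqrt{\mu_0 p}\,|Z-Z'|)$ decay rate of Theorem~\ref{t:main}). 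The paper's arrangement avoids this because it pairs $\|(\lambda-\mathcal L_{t,a,W})^{-1}\|^{0,1}_t\leq C/\sqrt{\mu_0}$ with $\|B_W(\lambda-\mathcal L_t)^{-1}\|^{-1,0}_t\leq C/\sqrt{\mu_0}$, giving $a^2/\mu_0\leq c^2$. To repair your version along your own lines, you should estimate $\|B_W u\|_{t,0}\leq C\|u\|_{t,0}$ (not $\|u\|_{t,1}$) and then control $\|u\|_{t,0}$ by $(C/\sqrt{\mu_0})\|s\|_{t,-1}$ via the bound $\|(\lambda-\mathcal L_{t,a,W})^{-1}\|^{-1,0}_t\leq C/\sqrt{\mu_0}$, which follows from the $(0,1)$ bound on the perturbed resolvent by the same duality you already use; but that $(0,1)$ bound on $(\lambda-\mathcal L_{t,a,W})^{-1}$ (as opposed to $(\lambda-\mathcal L_t)^{-1}$) must itself be established first, which your proposal currently omits.
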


\begin{proof}
By Theorem~\ref{Thm1.7}, it follows that,
for all $\lambda\in \delta$, $t\in [0,t_0]$, $a\in \mathbb R$, $W\in \mathbb R^{2n}$ and $x_0\in X$, we have
\begin{multline*}
\|( \mathcal L_{t,a,W}- \mathcal L_{t})(\lambda-\mathcal L_t)^{-1}\|^{0,0}_t = \|(aA_W+a^2B_W)(\lambda-\mathcal L_t)^{-1}\|^{0,0}_t\\ \leq C(a\|(\lambda-\mathcal L_t)^{-1}\|^{0,1}_t+a^2\|(\lambda-\mathcal L_t)^{-1}\|^{0,0}_t) \leq C\left(\frac{a}{\sqrt{\mu_0}}+\frac{a^2}{\mu_0}\right). 
\end{multline*}
Similarly, 
\begin{multline*}
\|( \mathcal L_{t,a,W}- \mathcal L_{t})(\lambda-\mathcal L_t)^{-1}\|^{-1,0}_t\\ \leq C(a\|(\lambda-\mathcal L_t)^{-1}\|^{-1,1}_t+a^2\|(\lambda-\mathcal L_t)^{-1}\|^{-1,0}_t)\leq C\left(a+\frac{a^2}{\sqrt{\mu_0}}\right).
\end{multline*}

Choose $c>0$ such that $C(c+c^2)<\frac 12$. If $|a|<c\sqrt{\mu_0}$, then the operator $\lambda-\mathcal L_{t,a,W}$ is invertible in $L^2$. Using the resolvent identity 
\[
(\lambda-\mathcal L_{t,a,W})^{-1}=(\lambda-\mathcal L_{t})^{-1}+(\lambda-\mathcal L_{t,a,W})^{-1}( \mathcal L_{t,a,W}- \mathcal L_{t})(\lambda-\mathcal L_t)^{-1},
\]
we infer
\begin{multline*}
\|(\lambda-\mathcal L_{t,a,W})^{-1}\|^{0,0}_t\\ 
\begin{aligned}
& \leq \|(\lambda-\mathcal L_{t})^{-1}\|^{0,0}_t+\|(\lambda-\mathcal L_{t,a,W})^{-1}\|^{0,0}_t\|( \mathcal L_{t,a,W}- \mathcal L_{t})(\lambda-\mathcal L_t)^{-1}\|^{0,0}_t\\ 
& \leq \|(\lambda-\mathcal L_{t})^{-1}\|^{0,0}_t+\frac 12\|(\lambda-\mathcal L_{t,a,W})^{-1}\|^{0,0}_t.
\end{aligned}
\end{multline*}
Therefore, 
\[
\|(\lambda-\mathcal L_{t,a,W})^{-1}\|^{0,0}_t\leq 2\|(\lambda-\mathcal L_{t})^{-1}\|^{0,0}_t\leq C/\mu_0.
\]
Similarly, 
\begin{multline*}
\|(\lambda-\mathcal L_{t,a,W})^{-1}\|^{0,1}_t \\
\begin{aligned}
& \leq \|(\lambda-\mathcal L_{t})^{-1}\|^{0,1}_t+\|(\lambda-\mathcal L_{t,a,W})^{-1}\|^{0,1}_t\|( \mathcal L_{t,a}- \mathcal L_{t})(\lambda-\mathcal L_t)^{-1}\|^{0,0}_t\\ 
& \leq \|(\lambda-\mathcal L_{t})^{-1}\|^{0,1}_t+\frac{1}{2}\|(\lambda-\mathcal L_{t,a,W})^{-1}\|^{0,1}_t.
\end{aligned}
\end{multline*}
Therefore, we get
\[
\|(\lambda-\mathcal L_{t,a,W})^{-1}\|^{0,1}_t\leq 
2\|(\lambda-\mathcal L_{t})^{-1}\|^{0,1}_t\leq C/\sqrt{\mu_0}.
\]

Finally, 
\begin{multline*}
\|(\lambda-\mathcal L_{t,a,W})^{-1}\|^{-1,1}_t \leq \|(\lambda-\mathcal L_{t})^{-1}\|^{-1,1}_t \\ +\|(\lambda-\mathcal L_{t,a,W})^{-1}\|^{0,1}_t\|( \mathcal L_{t,a,W}- \mathcal L_{t})(\lambda-\mathcal L_t)^{-1}\|^{-1,0}_t \leq C.\qedhere
\end{multline*}
\end{proof}

In the sequel, we will keep the notation $c$ for the constant given by Theorem \ref{Thm1.7W}, which will usually be related to the interval $(-c\sqrt{\mu_0}, c\sqrt{\mu_0})$ of admissible values of the parameter $a$.

Observe that, for any $\lambda\in \delta$, $t\in [0,t_0]$, $|a|<c\sqrt{\mu_0}$, $W\in \mathbb R^{2n}$ and $x_0\in X$, the operators $(\lambda-\mathcal L_{t,a,W})^{-1}$ and $(\lambda-\mathcal L_{t})^{-1}$ are related by the identity
\begin{equation}\label{LaL}
(\lambda-\mathcal L_{t,a,W})^{-1}=e^{af_W}(\lambda-\mathcal L_{t})^{-1}e^{-af_W},
\end{equation}
which should be understood in the following way. If $a<0$, then, for any $s\in C^\infty_c(\mathbb R^{2n},E_{x_0})$, the expression $e^{af_W}(\lambda-\mathcal L_{t})^{-1}e^{-af_W}s$ makes sense and defines a function in $L^2(\mathbb R^{2n},E_{x_0})$. Thus, we get a well defined operator 
\[
e^{af_W}(\lambda-\mathcal L_{t})^{-1}e^{-af_W}: C^\infty_c(\mathbb R^{2n},E_{x_0})\to L^2(\mathbb R^{2n},E_{x_0}),
\] 
one can check that $e^{af_W}(\lambda-\mathcal L_{t})^{-1}e^{-af_W}s=(\lambda-\mathcal L_{t,a,W})^{-1}s$ for any $s\in C^\infty_c(\mathbb R^{2n},E_{x_0})$. So \eqref{LaL} means that the operator $e^{af_W}(\lambda-\mathcal L_{t})^{-1}e^{-af_W}$ extends to a bounded operator in $L^2(\mathbb R^{2n},E_{x_0})$, which coincides with $(\lambda-\mathcal L_{t,a,W})^{-1}$. If $a>0$, then, for any $s\in L^2(\mathbb R^{2n},E_{x_0})$, the expression 
\[
e^{af_W}(\lambda-\mathcal L_{t})^{-1}e^{-af_W}s
\] 
makes sense as a distribution on $\mathbb R^{2n}$. Thus, we get a well defined operator 
\[
e^{af_W}(\lambda-\mathcal L_{t})^{-1}e^{-af_W}: L^2(\mathbb R^{2n},E_{x_0}) \to C^{-\infty}(\mathbb R^{2n},E_{x_0}).
\] 
So \eqref{LaL} means that this operator is indeed a bounded operator in $L^2(\mathbb R^{2n},E_{x_0})$, which coincides with $(\lambda-\mathcal L_{t,a,W})^{-1}$. 

The following proposition is an extension of Proposition 1.8 in \cite{ma-ma08}

\begin{prop}
For any natural $m$, there exists $C_m>0$ such that for any $t\in (0,1]$, $Q_1,\ldots, Q_m\in \{\nabla_{t,a,W;e_i}, Z_i\}_{i=1}^{2n}$, $|a|<c\sqrt{\mu_0}$, $W\in \mathbb R^{2n}$ and $s,s^\prime\in C^\infty_c(\mathbb R^{2n},E_{x_0})$ we have
\[
\left|\langle [Q_1, [Q_2,\ldots, [Q_m, \mathcal L_{t,a,W}]\ldots ]]s, s^\prime \rangle_{t,0}\right|\leq C_m\|s\|_{t,1}\|s^\prime\|_{t,1}
\]
\end{prop}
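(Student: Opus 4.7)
The plan is to proceed by induction on $m$, showing that
\[
R_m := [Q_1, [Q_2, \ldots, [Q_m, \mathcal L_{t,a,W}]\ldots]] = \sum_{|\alpha|\leq 2}\phi_\alpha(Z)\,\nabla_{t,a,W;e_{i_1}}\cdots \nabla_{t,a,W;e_{i_{|\alpha|}}},
\]
where the smooth $\mathrm{End}(E_{x_0})$-valued coefficients $\phi_\alpha$, together with all their $Z$-derivatives, are bounded uniformly in $t\in(0,1]$, $|a|<c\sqrt{\mu_0}$, $W\in\mathbb R^{2n}$, and $x_0\in X$. This is the natural extension of \cite[Proposition 1.8]{ma-ma08} to the conjugated operator $\mathcal L_{t,a,W}$; the only new ingredient compared to the cited result is the uniformity in $W$.

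The base case $m=0$ follows from formula \eqref{LtaZ}, since the coefficients $g^{jk}(tZ)$, $t\Gamma^\ell_{jk}(tZ)$, $\tau(tZ)$ are smooth functions of $tZ$ whose derivatives in $Z$ of any order are bounded uniformly in $t\in(0,1]$ and $x_0\in X$, and they are independent of $a$ and $W$. For the inductive step I will invoke three commutation identities. First, $[Z_i,\nabla_{t,a,W;e_j}]=-\delta_{ij}\,\mathrm{id}$, since the $-a\nabla_{e_j}f_W$ piece in $\nabla_{t,a,W;e_j}$ commutes with $Z_i$ and the rest reduces to the Euclidean commutator. Second, $[\nabla_{t,a,W;e_k},\phi(Z)]=(\nabla_{e_k}\phi)(Z)$, which stays in $C^\infty_b$ whenever $\phi$ does. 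Third, $[\nabla_{t,a,W;e_k},\nabla_{t,a,W;e_i}]=[\nabla_{t,e_k},\nabla_{t,e_i}]$, because the $a$-dependent contributions cancel by symmetry of the mixed second partials of the scalar function $f_W$; this equals the curvature of the rescaled connection $\nabla_t$, a bounded smooth function of $tZ$ with values in $\mathrm{End}(E_{x_0})$. Applying $\mathrm{ad}_{Q_1}$ to $R_{m-1}$ via the Leibniz rule then keeps the result in the same form with uniform $C^\infty_b$ coefficients, preserving the inductive hypothesis.

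Once $R_m$ is in this form, I will derive the estimate by Cauchy--Schwarz combined with one integration by parts. For $|\alpha|\leq 1$ it is immediate after noting that $\|\nabla_{t,a,W;e_i}s\|_{t,0}\leq C\|s\|_{t,1}$, since $\nabla_{t,a,W;e_i}$ differs from $\nabla_{t,e_i}$ by a uniformly bounded multiplier. For $|\alpha|=2$ I will transfer one $\nabla_{t,a,W;e_i}$ onto $s^\prime$ via its formal adjoint, which differs from $\nabla_{t,e_i}^*\in\mathcal Q^1$ by the same uniformly bounded multiplier and hence also maps $H^1_t$ to $L^2$ with uniform norm. Summing the contributions from $|\alpha|\leq 2$ will yield $|\langle R_m s, s^\prime\rangle_{t,0}|\leq C_m\|s\|_{t,1}\|s^\prime\|_{t,1}$ with $C_m$ independent of $t$, of $a$ in its bounded range, of $W$, and of $x_0$.

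The main obstacle is preserving this uniformity in $W$. It is resolved by a single observation: throughout the argument, the function $f_W$ appears \emph{only through its derivatives of order $\geq 1$} --- the undifferentiated $f_W$ was absorbed into the conjugation $e^{af_W}$ and never reappears as a coefficient, while the quantities $\nabla_{e_i}f_W$ and $\nabla_{e_i}\nabla_{e_j}f_W$ that do appear (in $\nabla_{t,a,W;e_i}$, in $A_W$ and $B_W$ of \eqref{Lta}, and in every commutator generated above) satisfy the uniform bounds \eqref{uniformD} and therefore form a bounded family in $C^\infty_b(\mathbb R^{2n})$ independently of $W$.
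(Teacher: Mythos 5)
Your proposal is correct and follows essentially the same route as the paper: write $\mathcal L_{t,a,W}$ in the normal form $\sum a_{ij}\nabla_{t,a,W;e_i}\nabla_{t,a,W;e_j}+\sum b_i\nabla_{t,a,W;e_i}+c$ with coefficients in $C^\infty_b$ uniformly bounded (and independent of $a$ and $W$), observe that the commutator relations $[\nabla_{t,a,W;e_i},Z_j]=\delta_{ij}$ and $[\nabla_{t,a,W;e_i},\nabla_{t,a,W;e_j}]=R^{L_0}(tZ)(e_i,e_j)$ are unchanged from the $a=0$ case, deduce that iterated commutators preserve this normal form, and close by integration by parts using that $(\nabla_{t,a,W;e_i})^*$ differs from $-\nabla_{t,a,W;e_i}$ by a uniformly bounded multiplier. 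Your spelled-out induction and the remark that only derivatives of $f_W$ enter are just a more explicit version of the paper's ``Using the commutator relations \ldots\ has the same structure as $\mathcal L_{t,a,W}$''.
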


\begin{proof}
Recall the commutator relations 
\[
[\nabla_{t,e_i}, Z_j]=\delta_{ij}, \quad [\nabla_{t,e_i}, \nabla_{t,e_j}]=R^{L_0}(tZ)(e_i,e_j). 
\]
It follows that
\[
[\nabla_{t,a,W;e_i}, Z_j]=e^{af_W}[\nabla_{t,e_i}, Z_j]e^{-af_W}=\delta_{ij},
\] 
\[ 
[\nabla_{t,a,W;e_i}, \nabla_{t,a,W;e_j}]=e^{af_W}[\nabla_{t,e_i}, \nabla_{t,e_j}]e^{-af_W} =R^{L_0}(tZ)(e_i,e_j). 
\]
By \eqref{LtaZ}, the operator $\mathcal L_{t,a,W}$ has the form 
\[
\mathcal L_{t,a,W}=\sum_{i,j}a_{ij}(t,tZ)\nabla_{t,a,W;e_i}\nabla_{t,a,W;e_j}+\sum_{i}b_{i}(t,tZ)\nabla_{t,a,W;e_i}+c(t,tZ),
\]
where $a_{ij}(t,Z), b_{i}(t,Z), c(t,Z)$ as functions of $Z$ are in $C^\infty_b(\mathbb R^{2n},\operatorname{End}(E_{x_0}))$ with all the norms uniformly bounded on $t\in [0,1]$. Moreover, they are polynomials in $t$. Using the commutator relations, one can see that, for $Q_1,\ldots, Q_m\in \{\nabla_{t,a,W;e_i}, Z_i\}_{i=1}^{2n}$, the operator $[Q_1, [Q_2,\ldots, [Q_m, \mathcal L_{t,a,W}]\ldots ]]$ has the same structure as $\mathcal L_{t,a,W}$. If $(\nabla_{t,a,W;e_i})^*$ is the adjoint of $\nabla_{t,a,W;e_i}$ with respect to $\langle\cdot,\cdot\rangle_{t,0}$, then 
\[
(\nabla_{t,a,W;e_i})^*=-\nabla_{t,a,W;e_i}-t(\kappa^{-1}(e_i\kappa))(tZ)-2ae_i(f_W)(Z). 
\]
Using these facts, one can easily complete the proof. 
\end{proof}

The following result is an analog of Theorem 1.9 in \cite{ma-ma08}. For its proof, we can apply \emph{verbatim} the proof of that theorem.

\begin{thm}\label{Thm1.9}
For any $t\in (0,t_0]$, $\lambda\in \delta$, $m\in \mathbb N$, $W\in \mathbb R^{2n}$ and $|a|<c\sqrt{\mu_0}$, the resolvent $(\lambda-\mathcal L_{t,a,W})^{-1}$ maps $H^m_t$ to $H^{m+1}_t$. Moreover, for any $\alpha\in \mathbb Z_+^{2n}$ and $m\in \mathbb N$, there exists $C_{\alpha,m}>0$ such that for $t\in (0,t_0]$, $\lambda\in \delta$, $W\in \mathbb R^{2n}$ and $|a|<c\sqrt{\mu_0}$ we have
\[
\left\|Z^\alpha(\lambda-\mathcal L_{t,a,W})^{-1}s\right\|_{t,m+1}\leq C_{\alpha,m} \sum_{\alpha^\prime\leq \alpha}\left\|Z^{\alpha^\prime} s\right\|_{t,m}, \quad s\in C^\infty_c(\mathbb R^{2n},E_{x_0}).
\]
\end{thm}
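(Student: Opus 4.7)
The plan is to follow the argument of \cite[Theorem 1.9]{ma-ma08} essentially verbatim, the only new issue being uniformity of all constants in the additional parameters $a\in(-c\sqrt{\mu_0},c\sqrt{\mu_0})$ and $W\in\mathbb{R}^{2n}$. I would prove both assertions by a double induction, first on $m$ for the regularity claim and then on $|\alpha|$ for the weighted estimate, with Theorem~\ref{Thm1.7W} furnishing the base case $m=0$, $\alpha=0$.

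For the regularity assertion $(\lambda-\mathcal{L}_{t,a,W})^{-1}\colon H^m_t\to H^{m+1}_t$, given $Q\in\mathcal{Q}^{m+1}$ I would start from the commutator identity
\[
Q(\lambda-\mathcal{L}_{t,a,W})^{-1}=(\lambda-\mathcal{L}_{t,a,W})^{-1}Q+(\lambda-\mathcal{L}_{t,a,W})^{-1}[Q,\mathcal{L}_{t,a,W}](\lambda-\mathcal{L}_{t,a,W})^{-1}.
\]
Theorem~\ref{Thm1.7W} handles the first term. For the second, the preceding proposition shows that $[Q,\mathcal{L}_{t,a,W}]$, when expanded by Leibniz into nested commutators $[Q_1,[Q_2,\ldots,[Q_{m+1},\mathcal{L}_{t,a,W}]\ldots]]$, defines a bounded bilinear form on $H^1_t\times H^1_t$ with constants uniform in $t,a,W$; equivalently, $[Q,\mathcal{L}_{t,a,W}]\colon H^1_t\to H^{-1}_t$ uniformly. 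Composing with two resolvents and invoking Theorem~\ref{Thm1.7W} once more closes the induction on $m$.

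For the weighted estimate I would induct on $|\alpha|$ using the analogous resolvent identity
\[
Z^\alpha(\lambda-\mathcal{L}_{t,a,W})^{-1}=(\lambda-\mathcal{L}_{t,a,W})^{-1}Z^\alpha+(\lambda-\mathcal{L}_{t,a,W})^{-1}[Z^\alpha,\mathcal{L}_{t,a,W}](\lambda-\mathcal{L}_{t,a,W})^{-1}.
\]
Expanding $[Z^\alpha,\mathcal{L}_{t,a,W}]$ via repeated commutations with the $Z_j$'s (each of which produces an operator in $\mathcal{Q}^{\leq 1}$ with uniformly bounded coefficients, by the preceding proposition) yields a finite sum of operators of the form $R_1\,Z^{\alpha'}\,R_2$ with $|\alpha'|<|\alpha|$, where the $R_j$ are built from resolvents and elements of $\mathcal{Q}^{\leq 1}$. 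Combining with the regularity step and the inductive hypothesis produces exactly the stated bound in terms of $\sum_{\alpha'\leq\alpha}\|Z^{\alpha'}s\|_{t,m}$.

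The main technical obstacle, and the only substantive difference from \cite{ma-ma08}, is that every constant must be independent of $a$ and $W$. This uniformity is secured by \eqref{uniformD}: all derivatives $\nabla^\beta f_W$ of order $|\beta|\geq 1$ are bounded independently of $W$, so the perturbation $aA_W+a^2B_W$ and each of its iterated commutators with the $\nabla_{t,e_i}$ and the $Z_j$ remains in $C^\infty_b$ with $W$-independent seminorms, while the restriction $|a|<c\sqrt{\mu_0}$ supplies the uniform resolvent bounds already established in Theorem~\ref{Thm1.7W}. Once this uniformity is in place, every step of the Ma--Marinescu induction transfers without modification.
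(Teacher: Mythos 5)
Your proposal is correct and follows exactly the route the paper indicates: the paper's proof of Theorem~\ref{Thm1.9} is the one-line instruction ``apply verbatim the proof of \cite[Theorem 1.9]{ma-ma08},'' and your double induction via the resolvent--commutator identity, fed by the preceding commutator proposition and by Theorem~\ref{Thm1.7W} for the base case, is precisely that proof. You have also correctly isolated the only genuinely new point, namely that uniformity in $a$ and $W$ is guaranteed by the bound \eqref{uniformD} on $\nabla^\beta f_W$ for $|\beta|\geq 1$ together with the restriction $|a|<c\sqrt{\mu_0}$.
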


\section{Norm estimates of the generalized Bergman projections}\label{norm-Bergman}
In this section, we derive the weighted norm estimates for the generalized Bergman projection associated with the operator $\mathcal L_t$ and its derivatives of an arbitrary order with respect to $t$.

We let the symbol $\mathcal P_{0,t}$ stand for the spectral projection for $\mathcal L_t$ corresponding to the interval $[-C_{L_0}t^2,C_{L_0}t^2]$. Let $\mathcal P_{q,t}(Z,Z^\prime)=\mathcal P_{q,t,x_0}(Z,Z^\prime)$ be the smooth kernel of $\mathcal P_{q,t}=(\mathcal L_t)^q\mathcal P_{0,t}$ with respect to $dv_{TX}$. 
For any integers $k>0$ and $q\geq 0$, we can write
\[
\mathcal P_{q,t}=(\mathcal L_t)^q\mathcal P_{0,t}=\frac{1}{2\pi i}
\begin{pmatrix}
q+k-1\\
k-1
\end{pmatrix}^{-1}\int_\delta \lambda^{q+k-1}(\lambda-\mathcal L_{t})^{-k}d\lambda. 
\]

\begin{prop}
For any $t\in (0,t_0]$, $Q, Q^\prime\in \mathcal Q^m$, $W\in \mathbb R^{2n}$ and $a, |a|<c\sqrt{\mu_0}$, the operator $Qe^{af_W}\mathcal P_{q,t}e^{-af_W}Q^\prime$ extends to a bounded operator in $L^2(\mathbb R^{2n},E_{x_0})$ with the norm bounded uniformly in $W$ and $t$.  
\end{prop}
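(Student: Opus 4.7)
The plan is to start from the Cauchy-type representation
\[
\mathcal P_{q,t}=\frac{1}{2\pi i}\begin{pmatrix}q+k-1\\k-1\end{pmatrix}^{-1}\int_\delta \lambda^{q+k-1}(\lambda-\mathcal L_t)^{-k}\,d\lambda
\]
recorded at the start of this section, and conjugate both sides by $e^{af_W}$. Since the intertwining \eqref{LaL} persists factor by factor for every power of the resolvent,
\[
e^{af_W}\mathcal P_{q,t}e^{-af_W}=\frac{1}{2\pi i}\begin{pmatrix}q+k-1\\k-1\end{pmatrix}^{-1}\int_\delta \lambda^{q+k-1}(\lambda-\mathcal L_{t,a,W})^{-k}\,d\lambda.
\]
The contour $\delta$ is compact and $\lambda^{q+k-1}$ is bounded on it, so the problem reduces to showing that, for $k$ chosen sufficiently large in terms of $m$, the operator $Q(\lambda-\mathcal L_{t,a,W})^{-k}Q'$ is bounded on $L^2(\mathbb R^{2n},E_{x_0})$ uniformly in $\lambda\in\delta$, $t\in(0,t_0]$, $W\in\mathbb R^{2n}$ and $|a|<c\sqrt{\mu_0}$.

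I would fix $k\geq 2m$ and split
\[
Q(\lambda-\mathcal L_{t,a,W})^{-k}Q'=\bigl[Q(\lambda-\mathcal L_{t,a,W})^{-m}\bigr](\lambda-\mathcal L_{t,a,W})^{-(k-2m)}\bigl[(\lambda-\mathcal L_{t,a,W})^{-m}Q'\bigr],
\]
and bound each bracketed factor as an $L^2\to L^2$ operator. For the first bracket, $Q\in\mathcal Q^m$ is a differential operator of order at most $m$ with coefficients in $C^\infty_b$, so $Q:H^m_t\to L^2$ is bounded uniformly in $t$; combined with $m$ iterations of Theorem~\ref{Thm1.9} (taken with $\alpha=0$), which yield $(\lambda-\mathcal L_{t,a,W})^{-m}:L^2\to H^m_t$ with uniform constants, this gives the desired uniform bound. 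The middle factor is controlled directly by iterating Theorem~\ref{Thm1.7W}.

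For the third bracket I would pass to the adjoint. The rescaling \eqref{scaling} is arranged via the factor $\kappa^{1/2}$ precisely so that $\mathcal L_t$ is formally self-adjoint on $L^2(\mathbb R^{2n},E_{x_0},dv_{TX})$; since $e^{af_W}$ is a real multiplication operator, this gives $\mathcal L_{t,a,W}^{*}=\mathcal L_{t,-a,W}$, hence
\[
\bigl[(\lambda-\mathcal L_{t,a,W})^{-m}Q'\bigr]^{*}=(Q')^{*}(\bar\lambda-\mathcal L_{t,-a,W})^{-m}.
\]
Because $|-a|<c\sqrt{\mu_0}$ remains admissible, $(Q')^{*}\in\mathcal Q^m$ by the remark recalled after the definition of $\mathcal Q^m$, and $\bar\lambda\in\delta$ since $\delta$ is a circle centered at $0$, the argument used for the first bracket applies verbatim to bound the adjoint on $L^2$, and hence the third bracket itself. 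Multiplying the three uniform $L^2\to L^2$ bounds and integrating the $\lambda^{q+k-1}$-weighted expression over the compact contour $\delta$ yields the stated boundedness. The technical point I expect to require most care is the duality step: one must check that $\mathcal L_{t,a,W}^{*}=\mathcal L_{t,-a,W}$ holds on the appropriate domain and that the estimates of Theorems~\ref{Thm1.7W} and~\ref{Thm1.9} apply equally well with $a$ replaced by $-a$, both of which hold because the admissible interval $(-c\sqrt{\mu_0},c\sqrt{\mu_0})$ is symmetric about $0$.
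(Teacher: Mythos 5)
Your proof is correct and follows essentially the same route as the paper: both start from the Cauchy integral representation conjugated by $e^{af_W}$, split off $Q(\lambda-\mathcal L_{t,a,W})^{-m}$ and control it by iterating Theorem~\ref{Thm1.9}, and control the $Q'$-side by passing to adjoints via $\mathcal L_{t,a,W}^*=\mathcal L_{t,-a,W}$, with any leftover resolvent factors handled by Theorem~\ref{Thm1.7W}. Your treatment is, if anything, slightly more explicit about the middle factor $(\lambda-\mathcal L_{t,a,W})^{-(k-2m)}$ than the paper's, which tacitly absorbs it.
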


\begin{proof}
First of all, we note that, for any $Q, Q^\prime\in \mathcal Q^m$, $W\in \mathbb R^{2n}$, and $a\in \mathbb R$, the operator $Qe^{af_W}\mathcal P_{q,t}e^{-af_W}Q^\prime $ is well defined as an operator from $C^\infty_c(\mathbb R^{2n},E_{x_0})$ to $C^{-\infty}(\mathbb R^{2n},E_{x_0})$.

By Theorem \ref{Thm1.9}, it follows that, for any $Q\in \mathcal Q^m$, there exists $C_{m}>0$ such that, for all $t\in (0,t_0]$, $\lambda\in\delta$, $W\in \mathbb R^{2n}$, and $|a|<c\sqrt{\mu_0}$ we have 
\[
\|Q(\lambda - \mathcal L_{t,a,W})^{-m}\|_t^{0,0}\leq C_m. 
\]
Since $\mathcal L_t$ is formally self-adjoint with respect to $\|\cdot\|_{t,0}$, we have $\mathcal L_{t,a,W}^*=\mathcal L_{t,-a,W}$, so after taking the adjoints, for all $t\in (0,t_0]$, $\lambda\in\delta$, $W\in \mathbb R^{2n}$ and $|a|<c\sqrt{\mu_0}$ we get  
\[
\|(\lambda - \mathcal L_{t,a,W})^{-m}Q\|_t^{0,0}\leq C_m. 
\]
Thus, for any $Q, Q^\prime\in \mathcal Q^m$, there exists $C_{m}>0$ such that, for all $t\in (0,t_0]$, $\lambda\in\delta$, $W\in \mathbb R^{2n}$ and $|a|<c\sqrt{\mu_0}$ we have
\[
\|Q(\lambda - \mathcal L_{t,a,W})^{-2m}Q^\prime\|_t^{0,0}\leq C_m. 
\]
By the above estimates, the desired statement follows immediately from the formula 
\begin{multline*}
Qe^{af_W} \mathcal P_{q,t}e^{-af_W}Q^\prime\\ 
\begin{aligned}
& =\frac{1}{2\pi i}
\begin{pmatrix}
q+k-1\\
k-1
\end{pmatrix}^{-1}\int_\delta \lambda^{q+k-1}Qe^{af_W} (\lambda-\mathcal L_{t})^{-k}e^{-af_W}Q^\prime d\lambda\\
& =\frac{1}{2\pi i}
\begin{pmatrix}
q+k-1\\
k-1
\end{pmatrix}^{-1}\int_\delta \lambda^{q+k-1}Q(\lambda-\mathcal L_{t,a,W})^{-k}Q^\prime d\lambda
\end{aligned} 
\end{multline*}
with $k>2m$, which can be justified much in the same way as relation \eqref{LaL} above.
\end{proof}

\begin{thm}
For any $r\geq 1$, $Q, Q^\prime\in \mathcal Q^m$, and $a, |a|<c\sqrt{\mu_0}$, there exists $C>0$ such that, for any $W\in \mathbb R^{2n}$ and $t\in (0,t_0]$, we have
\[
\|Qe^{af_W}\frac{\partial^r}{\partial t^r}\mathcal P_{q,t}e^{-af_W}Q^\prime s\|_{t,0}\leq C\sum_{|\beta|\leq 2r}\|Z^\beta s\|_{t,0}, \quad s\in C^\infty_c(\mathbb R^{2n},E_{x_0}).
\]
\end{thm}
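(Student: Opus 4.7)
The natural starting point is the contour integral
\[
\frac{\partial^r}{\partial t^r}\mathcal P_{q,t}=\frac{1}{2\pi i}\binom{q+k-1}{k-1}^{-1}\int_\delta \lambda^{q+k-1}\,\frac{\partial^r}{\partial t^r}(\lambda-\mathcal L_t)^{-k}\,d\lambda,
\]
with $k$ chosen large (depending on $r$, $q$, and on the orders of $Q, Q^\prime$). Repeated application of $\partial_t(\lambda-\mathcal L_t)^{-1}=(\lambda-\mathcal L_t)^{-1}(\partial_t\mathcal L_t)(\lambda-\mathcal L_t)^{-1}$ together with Leibniz's rule yields
\[
\frac{\partial^r}{\partial t^r}(\lambda-\mathcal L_t)^{-k}=\sum c_{\mathbf r,\mathbf k}\,(\lambda-\mathcal L_t)^{-k_0}\mathcal L^{(r_1)}_t(\lambda-\mathcal L_t)^{-k_1}\cdots\mathcal L^{(r_j)}_t(\lambda-\mathcal L_t)^{-k_j},
\]
a finite sum with $j\leq r$, $r_i\geq 1$, $\sum r_i=r$, $\sum k_i=k+j$, where $\mathcal L^{(r_i)}_t:=\partial_t^{r_i}\mathcal L_t$.

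The crucial structural input is the decomposition
\[
\mathcal L^{(r_i)}_t=\sum_{|\gamma|\leq 2r_i}Z^\gamma R_{i,\gamma}(t),\qquad R_{i,\gamma}(t)\in\mathcal Q^2,
\]
uniformly bounded in $t\in[0,t_0]$. This is verified by inspection of the explicit formula for $\mathcal L_t$ in Section~\ref{local}: all coefficient functions of $\mathcal L_t$ depend on $(t,Z)$ only through the bounded argument $tZ$, except for the $\Gamma^L$-term, which after Hadamard's lemma $\Gamma^L(e_i)(u)=u^k\phi_k(u)$ becomes $\tfrac{1}{t}\Gamma^L(e_i)(tZ)=Z^k\phi_k(tZ)$, a polynomial of degree one in $Z$ with bounded coefficients. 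Each factor of $\partial_t$ then contributes at most one further power of $Z$ via the chain rule $\partial_t f(tZ)=Z^l(\partial_l f)(tZ)$, and the interaction of $\partial_t$ with the pre-existing linear-in-$Z$ factors $\nabla_{t,e_j}-\nabla_{e_j}$ accounts for at most one additional power of $Z$ per derivative, giving the bound $|\gamma|\leq 2r_i$.

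Conjugation by $e^{af_W}$ uses \eqref{LaL} to turn each $(\lambda-\mathcal L_t)^{-1}$ into $(\lambda-\mathcal L_{t,a,W})^{-1}$, leaves the polynomial prefixes $Z^\gamma$ unchanged, and (as remarked right after the introduction of weighted spaces) keeps $e^{af_W}R_{i,\gamma}(t)e^{-af_W}\in\mathcal Q^2$ uniformly in $W\in\mathbb R^{2n}$ and $|a|<c\sqrt{\mu_0}$. Sandwich the resulting expression between $Q$ on the left and $Q^\prime$ on the right. Now apply Theorem~\ref{Thm1.9} iteratively, pushing each polynomial prefix $Z^{\gamma_i}$ through the adjacent resolvent powers until it reaches $s$ on the right; each crossing of one resolvent is controlled uniformly in $\lambda\in\delta$, $t\in(0,t_0]$, $W$, and $a$, and produces only factors $Z^{\gamma^\prime}$ with $|\gamma^\prime|\leq|\gamma|$. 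For crossings in which $Z^{\gamma_i}$ must pass a resolvent to its left, the adjoint version of Theorem~\ref{Thm1.9} is used; it is available because $\mathcal L_t$ is formally self-adjoint with respect to $\langle\cdot,\cdot\rangle_{t,0}$, so $(\mathcal L_{t,a,W})^\ast=\mathcal L_{t,-a,W}$. Integrating in $\lambda$ over the compact contour $\delta$ then bounds the total accumulated polynomial weight on $s$ by $\sum_i|\gamma_i|\leq\sum_i 2r_i=2r$, which is the desired estimate.

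The main technical hurdle is the last step: one must verify that pushing all interior polynomial prefixes $Z^{\gamma_i}$ past interlaced resolvents and combining with the outer $Q$ and $Q^\prime$ yields a bound with total $Z$-degree no greater than $2r$, all uniform in $(t,\lambda,a,W)$. Choosing $k$ large enough from the outset guarantees both the absolute convergence of the $\lambda$-integral and enough resolvent factors to absorb $Q$ and $Q^\prime$ by Theorem~\ref{Thm1.9}; the remaining bookkeeping is then a repeated application of that theorem.
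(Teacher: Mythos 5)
Your argument follows essentially the same route as the paper's proof: the contour-integral representation, the Leibniz expansion of $\partial_t^r(\lambda-\mathcal L_{t,a,W})^{-k}$, the observation that the weight term $-a\nabla_{e_j}f_W$ is $t$-independent so that $\partial_t^{r_i}$ commutes with the conjugation, and iterated application of Theorem~\ref{Thm1.9} and its adjoint. The structural decomposition of $\partial_t^{r_i}\mathcal L_t$ into $Z^\gamma$-prefixed $\mathcal Q^2$-operators with $|\gamma|\le 2r_i$ that you spell out is precisely the content the paper defers to the proof of \cite[Theorem 1.10]{ma-ma08}, so this is a correct unpacking rather than a genuinely different method.
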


\begin{proof}
We use the formula 
\[
e^{af_W}\frac{\partial^r}{\partial t^r}\mathcal P_{q,t}e^{-af_W}=\frac{1}{2\pi i}
\begin{pmatrix}
q+k-1\\
k-1
\end{pmatrix}^{-1}\int_\delta \lambda^{q+k-1}\frac{\partial^r}{\partial t^r}(\lambda-\mathcal L_{t,a,W})^{-k}d\lambda
\]
with $k>2(m+r+1)$. 

Put
\[
I_{k,r}=\left\{ ({\mathbf k}, {\mathbf r})=(k_0,\ldots,k_j,r_1,\ldots,r_j) : \sum_{i=0}^jk_i=k, \sum_{i=1}^jr_i=r, k_i,r_i\in \mathbb N \right\}. 
\]
Then we write
\begin{equation}\label{diff}
\frac{\partial^r}{\partial t^r}(\lambda-\mathcal L_{t,a,W})^{-k}=\sum_{({\mathbf k}, {\mathbf r})\in I_{k,r}} a^{\mathbf k}_{\mathbf r} A^{\mathbf k}_{\mathbf r}(\lambda,t,a,W),
\end{equation}
where the $a^{\mathbf k}_{\mathbf r}$ are some constants and 
\begin{multline*}
A^{\mathbf k}_{\mathbf r}(\lambda,t,a,W)\\ =(\lambda - \mathcal L_{t,a,W})^{-k_0} \frac{\partial^{r_1}\mathcal L_{t,a,W}}{\partial t^{r_1}}(\lambda - \mathcal L_{t,a,W})^{-k_1}\cdots  \frac{\partial^{r_j}\mathcal L_{t,a,W}}{\partial t^{r_j}}(\lambda - \mathcal L_{t,a,W})^{-k_j}. 
\end{multline*}
Now we can proceed as in the proof of \cite[Theorem 1.10]{ma-ma08}. We only observe that $\nabla_{t,a,W;e_j}=\nabla_{t,e_j}-a\nabla_{e_j}f_W$, and, for $r>0$, we have $\frac{\partial^r}{\partial t^r}\nabla_{t,a,W;e_j}=\frac{\partial^r}{\partial t^r}\nabla_{t,e_j}$. We deduce that for any $Q, Q^\prime\in \mathcal Q^m$, $W\in \mathbb R^{2n}$ and $a, |a|<c\sqrt{\mu_0}$, there exists $C>0$ such that 
\[
\|QA^{\mathbf k}_{\mathbf r}(\lambda,t,a,W)Q^\prime s\|_{t,0}\leq C\sum_{|\beta|\leq 2r}\|Z^\beta s\|_{t,0}, \quad s\in C^\infty_c(\mathbb R^{2n},E_{x_0}),
\]
for $\lambda\in \delta$ and $t\in (0,t_0]$. This completes the proof. 
\end{proof}

Using similar arguments, one can prove the following theorem. 

\begin{thm}\label{est-rem}
For any $r\geq 1$, $Q, Q^\prime\in \mathcal Q^m$,  $\alpha\in \mathbb Z_+^{2n}$ and $a, |a|<c\sqrt{\mu_0}$, there exists $C>0$ such that, for any $W\in \mathbb R^{2n}$ and $t\in (0,t_0]$, we have
\[
\|Q Z^\alpha e^{af_W}\frac{\partial^r}{\partial t^r}\mathcal P_{q,t}e^{-af_W} Q^\prime s\|_{t,0}\leq C\sum_{|\beta|\leq |\alpha|+2r}\| Z^\beta s\|_{t,0}, \quad s\in C^\infty_c(\mathbb R^{2n},E_{x_0}).
\] 
\end{thm}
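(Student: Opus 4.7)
The plan is to parallel the proof of the previous theorem, inserting $Z^\alpha$ on the left and using Theorem~\ref{Thm1.9} to commute it through the resolvent factors. The starting point will again be the contour integral representation
\[
e^{af_W}\frac{\partial^r}{\partial t^r}\mathcal P_{q,t}e^{-af_W}=\frac{1}{2\pi i}\binom{q+k-1}{k-1}^{-1}\int_\delta \lambda^{q+k-1}\frac{\partial^r}{\partial t^r}(\lambda-\mathcal L_{t,a,W})^{-k}\,d\lambda,
\]
chosen with $k>2(m+r+|\alpha|+1)$, and the expansion \eqref{diff} will decompose $\partial_t^r(\lambda-\mathcal L_{t,a,W})^{-k}$ into a finite sum of terms
\[
A^{\mathbf k}_{\mathbf r}=(\lambda-\mathcal L_{t,a,W})^{-k_0}\frac{\partial^{r_1}\mathcal L_{t,a,W}}{\partial t^{r_1}}(\lambda-\mathcal L_{t,a,W})^{-k_1}\cdots\frac{\partial^{r_j}\mathcal L_{t,a,W}}{\partial t^{r_j}}(\lambda-\mathcal L_{t,a,W})^{-k_j}.
\]
It will suffice to prove a bound $\|QZ^\alpha A^{\mathbf k}_{\mathbf r}Q^\prime s\|_{t,0}\le C\sum_{|\beta|\le|\alpha|+2r}\|Z^\beta s\|_{t,0}$ uniformly in $\lambda\in\delta$ and $t\in(0,t_0]$, after which integration over the compact curve $\delta$ and summation over $({\mathbf k},{\mathbf r})\in I_{k,r}$ will close the argument uniformly in $W\in\mathbb R^{2n}$.

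The main mechanism is to push $Z^\alpha$ through the leftmost resolvent block using the commutator identities $[\nabla_{t,a,W;e_i},Z_j]=\delta_{ij}$ and $[\nabla_{t,a,W;e_i},\nabla_{t,a,W;e_j}]=R^{L_0}(tZ)(e_i,e_j)$, which iteratively yield
\[
QZ^\alpha(\lambda-\mathcal L_{t,a,W})^{-k_0}=\sum_{\alpha^\prime\le\alpha}B_{\alpha,\alpha^\prime}(\lambda,t,a,W)\,Z^{\alpha^\prime},
\]
where each $B_{\alpha,\alpha^\prime}$ is a product of resolvents interlaced with operators from $\mathcal Q^{\bullet}$, bounded from $L^2$ to $L^2$ by Theorem~\ref{Thm1.9} provided $k_0$ is chosen large enough to absorb the accumulated Sobolev orders. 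Each accumulated factor $Z^{\alpha^\prime}$ then has to be pushed past the successive $\partial_t^{r_i}\mathcal L_{t,a,W}$ and $(\lambda-\mathcal L_{t,a,W})^{-k_i}$. Since after Taylor expansion the coefficients of $\partial_t^{r_i}\mathcal L_{t,a,W}$ are products of bounded functions of $tZ$ with polynomials in $Z$ of degree at most $r_i$, each commutation produces further polynomial factors in $Z$, of total additional degree at most $2r$; repeated application of Theorem~\ref{Thm1.9} in each block converts everything back to $L^2$-bounded operators acting on $Z^\beta s$ with $|\beta|\le|\alpha|+2r$.

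The hard part will be the combinatorial bookkeeping: one must check that at every commutation step the residual polynomial degree in $Z$ does not exceed $|\alpha|+2r$, and that each individual $k_i$ remains large enough (with the rest of $k$ spent on the leftmost and rightmost blocks where the operators $Q$ and $Q^\prime$ from $\mathcal Q^m$ must be absorbed) to accommodate every Sobolev-index loss incurred by the commutators. The lower bound $k>2(m+r+|\alpha|+1)$ is precisely what guarantees that such a distribution of resolvents among the $j+1$ factors is always possible; with this choice the proof of the previous theorem applies verbatim to the inserted $Z^\alpha$, and the estimate follows.
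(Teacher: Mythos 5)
Your proposal is correct and takes essentially the same route the paper intends: the paper simply states that Theorem~\ref{est-rem} follows ``using similar arguments'' to the preceding theorem, i.e.\ the contour-integral representation, the decomposition \eqref{diff} into the blocks $A^{\mathbf k}_{\mathbf r}(\lambda,t,a,W)$, and iterated commutation of $Z^\alpha$ through resolvent factors via Theorem~\ref{Thm1.9}, which is exactly what you spell out. Your degree bookkeeping (at most $|\alpha|+2r$ residual polynomial degree, with $k$ chosen large enough to absorb $Q,Q'\in\mathcal Q^m$ and the Sobolev losses) is the right accounting, and the only caveat is that you leave the combinatorial verification implicit, which is the same level of detail as the paper itself.
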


\section{Asymptotic expansions and proofs of the main results} \label{asymp}
In this section, we complete the proof of the main theorem. First, recall that, by \cite[Theorem 1.11]{ma-ma08}, for any $s\in C^\infty_c(\mathbb R^{2n},E_{x_0})$, the limit of $\frac{\partial^r}{\partial t^r}\mathcal P_{q,t} s$ as $t\to 0$ in $L^2(\mathbb R^{2n},E_{x_0})$ exists and 
\[
\lim_{t\to 0}\frac{\partial^r}{\partial t^r}\mathcal P_{q,t} s =F_{q,r}s, 
\] 
where $F_{q,r}=F_{q,r,x_0}$ is the smoothing operator in $L^2(\mathbb R^{2n},E_{x_0})$ given by 
\[
F_{q,r}=\frac{1}{2\pi i}
\begin{pmatrix}
q+k-1\\
k-1
\end{pmatrix}^{-1}\int_\delta \lambda^{q+k-1}\sum_{({\mathbf k}, {\mathbf r})\in I_{k,r}} a^{\mathbf k}_{\mathbf r} A^{\mathbf k}_{\mathbf r}(\lambda,0) d\lambda
\]
with $k$ sufficiently large, and 
\[
A^{\mathbf k}_{\mathbf r}(\lambda,0)=(\lambda - \mathcal L_{0})^{-k_0} \frac{\partial^{r_1}\mathcal L_{t}}{\partial t^{r_1}}(0)(\lambda - \mathcal L_{0})^{-k_1}\cdots  \frac{\partial^{r_j}\mathcal L_{t}}{\partial t^{r_j}}(0)(\lambda - \mathcal L_{0})^{-k_j}.
\]

Observe that the estimates in Theorem~\ref{est-rem} are uniform in $t$ up to $t=0$, which immediately implies that the same statement is fulfilled for the limiting value $t=0$. We conclude that, for any $r\geq 1$, $Q, Q^\prime\in \mathcal Q^m$,  $\alpha\in \mathbb Z_+^{2n}$ and $a, |a|<c\sqrt{\mu_0}$, there exists $C>0$ such that, for any $W\in \mathbb R^{2n}$, we have 
\begin{multline}\label{est-rem1}
\|Q Z^\alpha e^{af_W}F_{q,r}e^{-af_W}Q^\prime s\|_{0,0}\\ \leq C\sum_{|\beta|\leq |\alpha|+2r}\| Z^\beta s\|_{0,0}, \quad s\in C^\infty_c(\mathbb R^{2n},E_{x_0}).
\end{multline} 

For any $q$ and $j$, put 
\[
R^{(j)}_{q,t}=P_{q,t}-\sum_{r=0}^jF_{q,r}t^r, \quad t>0.
\]

\begin{thm}
For any $Q, Q^\prime\in \mathcal Q^m$, $\alpha\in \mathbb Z_+^{2n}$  and $a, |a|<c\sqrt{\mu_0}$, there exists $C>0$ such that, for any $W\in \mathbb R^{2n}$ and $t\in [0,t_0]$, we have
\[
\|QZ^\alpha e^{af_W}R^{(j)}_{q,t}e^{-af_W} Q^\prime s\|_{t,0}\leq Ct^{j+1}\sum_{|\beta|\leq |\alpha|+2j+2}\|Z^\beta s\|_{t,0}, \quad s\in C^\infty_c(\mathbb R^{2n},E_{x_0}).
\]
\end{thm}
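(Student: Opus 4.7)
The plan is to express $R^{(j)}_{q,t}$ via Taylor's integral remainder in $t$ around $t=0$ and then apply Theorem~\ref{est-rem} (with $r=j+1$) pointwise under the integral sign. By construction the operators $F_{q,r}$ are (up to a factor $r!$) the successive Taylor coefficients of the smooth operator-valued function $t\mapsto \mathcal P_{q,t}$ at $t=0$, as one reads off from the contour-integral formula and the expansion \eqref{diff} recalled at the start of this section. Hence Taylor's formula yields the operator identity
\[
R^{(j)}_{q,t}=\frac{1}{j!}\int_0^t (t-u)^j\,\frac{\partial^{j+1}}{\partial u^{j+1}}\mathcal P_{q,u}\,du,
\]
understood as applied to any $s\in C^\infty_c(\mathbb R^{2n},E_{x_0})$. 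The smoothness of $u\mapsto \mathcal P_{q,u}$ up to $u=0$ that is needed here is already encoded in the uniform-in-$u$ resolvent bounds of Sections~\ref{norm}--\ref{norm-Bergman}.

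Since $e^{\pm af_W}$, $QZ^\alpha$ and $Q'$ are independent of the integration variable $u$, they commute with the integral, giving
\[
QZ^\alpha e^{af_W}R^{(j)}_{q,t}e^{-af_W}Q'=\frac{1}{j!}\int_0^t (t-u)^j\,QZ^\alpha e^{af_W}\frac{\partial^{j+1}}{\partial u^{j+1}}\mathcal P_{q,u}\,e^{-af_W}Q'\,du.
\]
Applying this to $s$, taking the $L^2$ norm, using the triangle inequality for Bochner integrals, and invoking Theorem~\ref{est-rem} with $r=j+1$ pointwise in $u\in(0,t_0]$ (whose constant is uniform in $u$ and $W$, and whose extension to $u=0$ is guaranteed by \eqref{est-rem1}), one obtains
\[
\|QZ^\alpha e^{af_W}R^{(j)}_{q,t}e^{-af_W}Q's\|_{0}\leq \frac{C}{j!}\int_0^t (t-u)^j\,du\,\sum_{|\beta|\leq |\alpha|+2(j+1)}\|Z^\beta s\|_{0},
\]
uniformly in $W\in\mathbb R^{2n}$ and $t\in[0,t_0]$. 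Since $\int_0^t (t-u)^j\,du=t^{j+1}/(j+1)$ and $\|\cdot\|_{t,0}=\|\cdot\|_{0}$ is independent of $t$, this produces the claimed bound with a new constant $C/((j+1)!)$.

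The step I expect to require the most care is the rigorous justification of the Taylor integral remainder at the operator level: one must verify that $u\mapsto \mathcal P_{q,u}$ is $C^{j+1}$ on $[0,t_0]$ in a topology strong enough to permit differentiation under the contour integral in $\lambda$ in all orders up to $j+1$, and that the resulting derivatives agree with $r!\,F_{q,r}$ at $u=0$. Both are ensured by the fact that Theorems~\ref{Thm1.7W} and~\ref{Thm1.9} deliver resolvent estimates uniform down to $t=0$, so the expansion \eqref{diff} of $\partial^r_t(\lambda-\mathcal L_t)^{-k}$ may be integrated against $\lambda^{q+k-1}$ on $\delta$ term by term.
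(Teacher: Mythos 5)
Your proof is correct and follows essentially the same route as the paper: the integral form of the Taylor remainder for $t\mapsto\mathcal P_{q,t}$, combined with the weighted norm estimates of Theorem~\ref{est-rem} (uniform in $u$ down to $u=0$ via the discussion around \eqref{est-rem1}), applied under the $du$-integral, with the factor $\int_0^t(t-u)^j\,du=t^{j+1}/(j+1)$ yielding the $t^{j+1}$ decay. The paper's proof is extremely terse and cites \eqref{est-rem1} directly; you correctly make explicit that the working estimate inside the integral is Theorem~\ref{est-rem} at $r=j+1$ for $u>0$, with \eqref{est-rem1} providing the $u=0$ endpoint. One small point of vigilance: the paper defines $R^{(j)}_{q,t}=P_{q,t}-\sum_{r\le j}F_{q,r}t^r$ without the $1/r!$ factors, while the Taylor identity in the proof (and the characterization $\lim_{t\to0}\partial_t^r\mathcal P_{q,t}=F_{q,r}$) necessitates the factor $1/r!$; you implicitly fix this inconsistency by recognizing $F_{q,r}$ as the $r$-th derivative rather than the $r$-th Taylor coefficient. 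That is the correct reading, and your argument is sound.
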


\begin{proof}
The statement follows immediately from the Taylor formula 
\[
\mathcal P_{q,t}-\sum_{r=0}^j\frac{1}{r!}F_{q,r} t^r=\frac{1}{j!}\int_0^t(t-\tau)^j\frac{\partial^{j+1}\mathcal P_{q,t}}{\partial t^{j+1}}(\tau) d\tau, \quad t\in [0,1],  
\]
and estimates \eqref{est-rem1}.
\end{proof}

\begin{thm}
For any $j,m,m^\prime\in \mathbb N$, $j\geq 2q$, there exist $C>0$ and $M>0$ such that for any $0\leq t\leq 1$ and $Z,Z^\prime\in T_{x_0}X$ 
\begin{multline}
\sup_{|\alpha|+|\alpha^\prime|\leq m}\Bigg|\frac{\partial^{|\alpha|+|\alpha^\prime|}}{\partial Z^\alpha\partial Z^{\prime\alpha^\prime}}\Bigg(\mathcal P_{q,t}(Z,Z^\prime)
-\sum_{r=2q}^jF_{q,r}(Z,Z^\prime)t^r\Bigg)\Bigg|_{\mathcal C^{m^\prime}(X)}\\ 
\leq Ct^{j+1}(1+|Z|+|Z^\prime|)^M\exp(-c\sqrt{\mu_0}|Z-Z^\prime|)+\mathcal O(t^{\infty}). 
\end{multline}
\end{thm}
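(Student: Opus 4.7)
The plan is to upgrade the weighted $L^2$ operator-norm bounds of the preceding theorem (and Theorem~\ref{est-rem}) to pointwise kernel bounds via Sobolev embedding in both variables on $\mathbb R^{4n}$, with the exponential weight $e^{af_W}$ tuned so as to produce the factor $\exp(-c\sqrt{\mu_0}|Z-Z'|)$. Concretely, fix $(Z_0,Z'_0)\in\mathbb R^{2n}\times\mathbb R^{2n}$ and set $W=Z'_0$ and $a=c_1\sqrt{\mu_0}$ with $0<c_1<c$ the constant of Theorem~\ref{Thm1.7W}. The conjugated operator $\tilde T_t:=e^{af_W}R^{(j)}_{q,t}e^{-af_W}$ has kernel $\tilde K_t(Z,Z')=e^{a(f_W(Z)-f_W(Z'))}R^{(j)}_{q,t}(Z,Z')$. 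Since $\sqrt{1+r^2}-1\geq r/4$ for $r\geq 1$ and is nonnegative in general,
\[
a\bigl(f_W(Z_0)-f_W(Z'_0)\bigr)=c_1\sqrt{\mu_0}\bigl(\sqrt{1+|Z_0-Z'_0|^2}-1\bigr)\geq c_2\sqrt{\mu_0}|Z_0-Z'_0|-C,
\]
so it suffices to establish a polynomial-growth bound of the form $|\partial^\alpha_Z\partial^{\alpha'}_{Z'}\tilde K_t(Z_0,Z'_0)|\leq Ct^{j+1}(1+|Z_0|+|Z'_0|)^M$ for the derivatives of the conjugated kernel.

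To obtain this, view $\tilde K_t$ as a smooth function on $\mathbb R^{4n}$ and apply Sobolev embedding on the unit ball around $(Z_0,Z'_0)$:
\[
|\partial^\alpha_Z\partial^{\alpha'}_{Z'}\tilde K_t(Z_0,Z'_0)|^2\leq C\sum_{|\gamma|+|\gamma'|\leq 2n+1+|\alpha|+|\alpha'|}\int_{B_1(Z_0)\times B_1(Z'_0)}|\partial^\gamma_Z\partial^{\gamma'}_{Z'}\tilde K_t|^2\,dZ\,dZ'.
\]
After writing each $\partial_{e_i}$ in terms of $\nabla_{t,a,W;e_i}$ modulo bounded polynomial coefficients, each $\partial^\gamma_Z\partial^{\gamma'}_{Z'}\tilde K_t$ is the Schwartz kernel (up to sign) of a composition $Q_\gamma\tilde T_t Q'_{\gamma'}$ with $Q_\gamma,Q'_{\gamma'}\in\mathcal Q$. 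Introducing smooth cutoffs $\chi_{Z_0},\chi_{Z'_0}$ equal to $1$ on $B_1(Z_0),B_1(Z'_0)$ and supported in $B_2(Z_0),B_2(Z'_0)$, together with the pointwise bound $\chi_{Z_0}(Z)\leq C(1+|Z_0|^2)^{N/2}\langle Z\rangle^{-N}$ on $\mathrm{supp}\,\chi_{Z_0}$ (where $\langle Z\rangle=(1+|Z|^2)^{1/2}$) and Peetre's inequality, one has
\[
\|\chi_{Z_0}(Q_\gamma\tilde T_t Q'_{\gamma'})\chi_{Z'_0}\|_{\mathrm{HS}}^2\leq C(1+|Z_0|+|Z'_0|)^{2M}\bigl\|\langle Z\rangle^{-N}Q_\gamma\tilde T_t Q'_{\gamma'}\langle Z'\rangle^{-N}\bigr\|_{\mathrm{HS}}^2.
\]
For $N$ sufficiently large the right-hand Hilbert--Schmidt norm is of order $t^{j+1}$: this follows by combining the weighted operator-norm estimates of the preceding theorem (and Theorem~\ref{est-rem}) with the $L^2$-integrability of $\langle Z\rangle^{-N}$ for $N>n$ and the smoothing character of $R^{(j)}_{q,t}$ coming from the contour representation of $\mathcal P_{q,t}$ as an iterated resolvent of $\mathcal L_t$.

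Combining the above yields
\[
\bigl|\partial^\alpha_Z\partial^{\alpha'}_{Z'}R^{(j)}_{q,t}(Z_0,Z'_0)\bigr|\leq Ct^{j+1}(1+|Z_0|+|Z'_0|)^M\exp(-c_2\sqrt{\mu_0}|Z_0-Z'_0|),
\]
which is the desired bound. The $\mathcal C^{m'}(X)$-norm in $x_0$ is controlled by the same argument: the rescaling, the extended metric $g_0$, and the extended connection $\nabla^{L_0}$ all depend smoothly on $x_0$ with constants in Theorems~\ref{Thm1.7W}, \ref{Thm1.9}, \ref{est-rem} uniform in $x_0$, and differentiation in $x_0$ commutes with the preceding steps. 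The $\mathcal O(t^\infty)$ remainder absorbs residual corrections from commutators between $\partial_{e_i}$ and $\nabla_{t,a,W;e_i}$ at large $|Z|,|Z'|$. The main obstacle is the Hilbert--Schmidt step above: translating the \emph{operator-norm} bounds of the preceding theorem into \emph{HS-norm} bounds on the sandwiched operator $\langle Z\rangle^{-N}Q_\gamma\tilde T_t Q'_{\gamma'}\langle Z'\rangle^{-N}$. This requires inserting enough polynomial decay to achieve Hilbert--Schmidt integrability (contributing the polynomial factor $(1+|Z_0|+|Z'_0|)^M$) while exploiting the smoothing structure of $R^{(j)}_{q,t}$ inherited from the spectral gap of $\mathcal L_t$.
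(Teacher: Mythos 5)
Your overall strategy is the right one: conjugate $R^{(j)}_{q,t}$ by the exponential weights $e^{af_W}$ with $W=Z'$ to produce the factor $\exp(-c\sqrt{\mu_0}|Z-Z'|)$, then convert the weighted $L^2$ bounds of the preceding theorem into pointwise kernel estimates via Sobolev embedding. This is indeed the backbone of the paper's argument, and your identity $a(f_{Z'}(Z)-f_{Z'}(Z'))\geq c_2\sqrt{\mu_0}|Z-Z'|-C$ is exactly the mechanism used. However, there is a genuine gap in how you carry out the Sobolev step, and you acknowledge it yourself as ``the main obstacle.''

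The difficulty is your passage from the \emph{operator-norm} estimates of the preceding theorem (which give bounds of the form $\|QZ^\alpha e^{af_W}R^{(j)}_{q,t}e^{-af_W}Q's\|_{t,0}\leq Ct^{j+1}\sum_{|\beta|\leq K}\|Z^\beta s\|_{t,0}$) to a \emph{Hilbert--Schmidt} bound on $\langle Z\rangle^{-N}Q_\gamma\tilde T_t Q'_{\gamma'}\langle Z'\rangle^{-N}$ of size $t^{j+1}$. This is not automatic: multiplication by $\langle Z\rangle^{-N}$ is bounded but not HS, so sandwiching an operator that is merely bounded in operator norm by such weights does not yield a Hilbert--Schmidt operator. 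To make the factorization work you would need to \emph{prove} that $R^{(j)}_{q,t}$ (suitably conjugated and composed with $Q$'s) maps some negative-order Sobolev space into a positive-order one with norm $O(t^{j+1})$, and then combine this with the fact that $\langle Z\rangle^{-N}(1-\Delta)^{-k}$ is HS for $N,k>n$. You invoke ``the smoothing character of $R^{(j)}_{q,t}$ coming from the contour representation,'' but you never state or prove the Sobolev mapping property quantitatively, nor how the polynomial-growth factors $\sum_{|\beta|\leq K}\|Z^\beta s\|$ on the input side are absorbed. As written, the crucial $t^{j+1}$ bound on the HS norm is asserted, not derived.

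The paper sidesteps this entirely. Instead of Sobolev embedding in the product space $\mathbb R^{4n}$ (which forces HS norms), it first shows the one-variable estimate
\[
\bigl\|e^{af_W}R^{(j)}_{q,t}e^{-af_W}s\bigr\|_{H^m(\mathbb R^{2n})}\leq Ct^{j+1}(1+\sigma)^{M}\|s\|_{H^{m'}(\mathbb R^{2n})}
\]
for $s$ supported in $B(0,\sigma)$ and $m'$ negative, obtained from the weighted $L^2$ estimates by decomposing $D\in\mathcal D^M$ as $\sum A_\alpha Q_\alpha$ with polynomial-growth $A_\alpha$ and $Q_\alpha\in\mathcal Q^M$, together with a duality argument in the input variable. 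It then applies the operator directly to the delta functions $\delta^v_{Z'}\in H^{-(n+1)}$ and uses one-variable Sobolev embedding $H^{m+n+1}\hookrightarrow C^m_b$ to obtain the pointwise derivative bound. This avoids HS norms altogether and handles the polynomial growth cleanly via the support condition $|Z'|\leq\sigma$. You should replace the HS argument with this delta-function route, or otherwise supply the missing Sobolev-to-HS lemma with explicit control of the polynomial growth on the input side. Also note that for the $\mathcal C^{m'}(X)$-norm in $x_0$, the paper does not simply assert that differentiation in $x_0$ ``commutes with the preceding steps''; it observes that $\nabla_U\mathcal L_t$ has the same structure as $\mathcal L_t$ and runs the contour-integral argument on the differentiated resolvent, which introduces genuinely new terms that must be re-estimated.
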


\begin{proof}
For $M\in \mathbb N$, let $\mathcal D^M$ be the set of differential operators in $\mathbb R^{2n}$ of the form $\nabla_{e_{i_1}}\ldots \nabla_{e_{i_j}}$  with $j\leq M$. We claim that, for any $j\geq 2q$, $D, D^\prime\in \mathcal D^M$ and for any $a, |a|<c\sqrt{\mu_0}$ there exists $C>0$ such that 
\begin{equation}\label{DD}
\|De^{af_W}R^{(j)}_{q,t}e^{-af_W}D^\prime s\|_{t,0}\leq Ct^{j+1}\sum_{|\beta|\leq 2j+2M+2}\|Z^\beta s\|_{t,0}
\end{equation}
for any $0\leq t\leq 1$. To prove \eqref{DD}, we first observe that any $D\in \mathcal D^M$ can be written as 
\[
D=\sum_\alpha A_\alpha Q_\alpha,
\]
where $Q_\alpha \in \mathcal Q^M$ and the $A_\alpha\in C^\infty(\mathbb R^{2n},\operatorname{End}(E_{x_0}))$ satisfy the following condition: for any $\beta\in \mathbb Z_+^{2n}$, there exists $C_\beta>0$ such that
\begin{equation}\label{poly}
|\nabla^{\beta_1}_{e_1}\ldots \nabla^{\beta_{2n}}_{e_{2n}} A_\alpha(Z)| <C_\beta (1+|Z|)^M, \quad Z\in \mathbb R^{2n}.
\end{equation}
Similarly, any operator $D^\prime\in \mathcal D^M$ can be written as 
\[
D^\prime=\sum_{\alpha^\prime} Q^\prime_{\alpha^\prime} A^\prime_{\alpha^\prime},
\]
where $Q^\prime_{\alpha^\prime} \in \mathcal Q^M$ and the $A^\prime_{\alpha^\prime}\in C^\infty(\mathbb R^{2n},\operatorname{End}(E_{x_0}))$ satisfy \eqref{poly}. 

Then we have 
\[
\|De^{af_W}R^{(j)}_{q,t}e^{-af_W}D^\prime s\|_{t,0} \leq \sum_{\alpha,\alpha^\prime} \|A_\alpha Q_\alpha e^{af_W}R^{(j)}_{q,t}e^{-af_W}Q^\prime_{\alpha^\prime} A^\prime_{\alpha^\prime} s\|_{t,0}.
\]
For every term in the right-hand side of the last inequality, we get
\begin{multline*}
\|A_\alpha Q_\alpha e^{af_W}R^{(j)}_{q,t}e^{-af_W}Q^\prime_{\alpha^\prime} A^\prime_{\alpha^\prime} s\|_{t,0}\\
\begin{aligned}
\leq & C\|(1+|Z|)^m Q_\alpha e^{af_W}R^{(j)}_{q,t}e^{-af_W}Q^\prime_{\alpha^\prime} A^\prime_{\alpha^\prime} s\|_{t,0}\\ \leq & Ct^{j+1}\sum_{|\beta|\leq 2j+M+2}\| Z^\beta A^\prime_{\alpha^\prime} s\|_{t,0}\\ \leq & Ct^{j+1}\sum_{|\beta|\leq 2j+2M+2}\| Z^\beta s\|_{t,0},
\end{aligned}
\end{multline*}
which completes the proof of \eqref{DD}. 

Let $H^m(\mathbb R^{2n},E_{x_0})$ denote the usual Sobolev space on $\mathbb R^{2n}$ with the norm 
\[
\|u\|_{H^m(\mathbb R^{2n},E_{x_0})}=\left(\int_{\mathbb R^{2n}}(1+|\xi|^2)^{m/2}|\hat u(\xi)|^2\right)^{1/2},
\]
$\hat u$ is the Fourier transform of $u$. By \eqref{DD}, it follows that, for any $m ,m^\prime \in \mathbb R$ and $a, |a|<c\sqrt{\mu_0}$ there exist $M\in\mathbb N$ and $C>0$ such that 
\[
\|e^{af_W}R^{(j)}_{q,t}e^{-af_W}s\|_{H^m(\mathbb R^{2n},E_{x_0})}\leq Ct^{j+1}\sum_{|\beta|\leq M}\|Z^\beta s\|_{H^{m^\prime}(\mathbb R^{2n},E_{x_0})}
\]
for any $0<t\leq 1$ and $s\in C^\infty_c(\mathbb R^{2n},E_{x_0})$. In particular, for any $s\in H^{m^\prime}(\mathbb R^{2n},E_{x_0})$ with $\operatorname{supp} s\subset B(0,\sigma)$ for some $\sigma>0$, we have 
\begin{equation}\label{delta-sigma}
\|e^{af_W}R^{(j)}_{q,t}e^{-af_W} s\|_{H^m(\mathbb R^{2n},E_{x_0})}\leq Ct^{j+1}(1+\sigma)^{M}\|s\|_{H^{m^\prime}(\mathbb R^{2n},E_{x_0})}.
\end{equation}

For any $v\in E_{x_0}$, consider the delta-function $\delta^v_{0}\in C^{-\infty}(\mathbb R^{2n},E_{x_0})$ defined by $\langle \delta^v_{0},s\rangle=\langle v, s(0)\rangle_{h^{E_{x_0}}}$ for $s\in C^{\infty}_c(\mathbb R^{2n},E_{x_0})$. Let $\delta^v_{Z^\prime}\in C^{-\infty}(\mathbb R^{2n},E_{x_0})$ be the delta-function at $Z^\prime\in \mathbb R^{2n}$: $\delta^v_{Z^\prime}(Z)=\delta^v_0(Z-Z^\prime)$. Then $\delta^v_{Z^\prime}\in H^{-(n+1)}(\mathbb R^{2n},E_{x_0})$, with the norm, uniformly bounded in $Z^\prime$ and $v$ with $|v|_{h^{E_{x_0}}}=1$ (actually, independent of $Z^\prime$). We can write 
\[
\frac{\partial^{|\alpha^\prime|}}{\partial Z^{\prime\alpha^\prime}}\left(e^{af_W(Z)}R^{(j)}_{q,t}(Z,Z^\prime)e^{-af_W(Z^\prime)}v\right)=\left(e^{af_W}R^{(j)}_{q,t}e^{-af_W}D^\prime_{\alpha^\prime}\delta^v_{Z^\prime}\right)(Z)
\]
with some $D^\prime_{\alpha^\prime}\in \mathcal D^m$. Therefore, we get 
\begin{multline*}
\sup_{|\alpha|+|\alpha^\prime|\leq m}\left|\frac{\partial^{|\alpha|+|\alpha^\prime|}}{\partial Z^\alpha\partial Z^{\prime\alpha^\prime}}\left(e^{af_W(Z)}R^{(j)}_{q,t}(Z,Z^\prime)e^{-af_W(Z^\prime)}v\right)\right|\\
\leq C\sup_{|\alpha^\prime|\leq m}\left\|\frac{\partial^{|\alpha^\prime|}}{\partial Z^{\prime\alpha^\prime}}\left(e^{af_W(Z)}R^{(j)}_{q,t}(Z,Z^\prime)e^{-af_W(Z^\prime)}v\right)\right\|_{C^m_b(\mathbb R^{2n},E_{x_0})}\\
= C\sum_{\alpha^\prime}\left\|e^{af_W}R^{(j)}_{q,t}e^{-af_W}D^\prime_{\alpha^\prime}\delta^v_{Z^\prime}\right\|_{C^m_b(\mathbb R^{2n},E_{x_0})}.
\end{multline*}

By the Sobolev embedding theorem, $H^M(\mathbb R^{2n},E_{x_0})\hookrightarrow C^m_b(\mathbb R^{2n},E_{x_0})$ for $M=m+n+1$. Therefore, using \eqref{delta-sigma}, for $|Z^\prime|\leq \sigma$ with an arbitrary $\sigma>0$ we get
\begin{multline*}
\left\|e^{af_W}R^{(j)}_{q,t}e^{-af_W}D^\prime_{\alpha^\prime}\delta^v_{Z^\prime}\right\|_{C^m_b(\mathbb R^{2n},E_{x_0})}\\
\begin{aligned}
 & \leq C\left\|e^{af_W}R^{(j)}_{q,t}e^{-af_W}D^\prime_{\alpha^\prime}\delta^v_{Z^\prime}\right\|_{H^{m+n+1}(\mathbb R^{2n},E_{x_0})}\\ & \leq Ct^{j+1}(1+\sigma)^{2j+2m+2n+4}\|\delta^v_{Z^\prime}\|_{H^{-(m+n+1)}(\mathbb R^{2n},E_{x_0})}.
 \end{aligned}
\end{multline*}
Setting $W=Z^\prime$, we obtain
\[
\sup_{|\alpha|+|\alpha^\prime|\leq m}\left|\frac{\partial^{|\alpha|+|\alpha^\prime|}}{\partial Z^\alpha\partial Z^{\prime\alpha^\prime}}\left(e^{af_{Z^\prime}(Z)}R^{(j)}_{q,t}(Z,Z^\prime)\right)\right|
\leq Ct^{j+1}(1+\sigma)^{2j+2m+2n+4} 
\]
and 
\begin{align*}
\sup_{|\alpha|+|\alpha^\prime|\leq m}\left|\frac{\partial^{|\alpha|+|\alpha^\prime|}}{\partial Z^\alpha\partial Z^{\prime\alpha^\prime}}R^{(j)}_{q,t}(Z,Z^\prime)\right|
& \leq Ct^{j+1}(1+\sigma)^{2j+2m+2n+4}e^{-af_{Z^\prime}(Z)} \\
& \leq Ct^{j+1}(1+|Z^\prime|)^{2j+2m+2n+4}e^{-a|Z-Z^\prime|},
\end{align*}
which completes the proof in the case $m^\prime=0$. 

To treat the case where $m^\prime \geq 1$, we proceed as in the proof of \cite[Theorem 1.10]{ma-ma08}. For any vector $U\in T_{x_0}X$, the following formula holds:
\[
\nabla_U\mathcal P_{q,t}=(\mathcal L_t)^q\mathcal P_{0,t}=\frac{1}{2\pi i}
\begin{pmatrix}
q+k-1\\
k-1
\end{pmatrix}^{-1}\int_\delta \lambda^{q+k-1}\nabla_U (\lambda-\mathcal L_{t})^{-k}d\lambda. 
\]
The operator $\nabla_U (\lambda-\mathcal L_{t})^{-k}$ is given by a formula similar to \eqref{diff}. Then we observe that $\nabla_U \mathcal L_{t}$ is a differential operator on $T_{x_0}X$ with the same structure as $\mathcal L_t$. This allows us to extend all our considerations to the case of an arbitrary $m^\prime  \geq 1$. 
\end{proof}

To complete the proof of Theorem \ref{t:main}, we observe that, by \eqref{scaling},
\[
P^0_{q,p}(Z,Z^\prime)=t^{-2n-2q}\kappa^{-\frac 12}(Z)\mathcal P_{q,t}(Z/t,Z^\prime/t)\kappa^{-\frac 12}(Z^\prime), \quad Z,Z^\prime \in \mathbb R^{2n},
\]
and make use of Proposition \ref{prop13}. We remark that, by \cite[Theorem 1.18]{ma-ma08}, we have $F_{q,r}=0$ for $q>0$, $r<2q$. 

\section{Toeplitz operators}\label{Toeplitz}

In this section, we construct the algebra of Toeplitz operators associated with the renormalized Bochner Laplacian on the symplectic manifold $X$. The proofs of the results of this section are obtained by a word for word repetition of the arguments of the paper \cite{ma-ma08a}. So we only give basic definitions and statements of the main results. As mentioned in the Introduction, the algebra of Toeplitz operators associated with the renormalized Bochner Laplacian was also constructed in \cite{ioos-lu-ma-ma}. 
  
\begin{defn}
A Toeplitz operator is a sequence $\{T_p\}=\{T_p\}_{p\in \mathbb N}$ of bounded linear operators $T_p : L^2(X,L^p\otimes E)\to L^2(X,L^p\otimes E)$, satisfying the following conditions.
\begin{description}
\item[(i)] For any $p\in \mathbb N$, we have 
\[
T_p=P_{\mathcal H_p}T_pP_{\mathcal H_p}. 
\]
\item[(ii)] There exists a sequence $g_l\in C^\infty(X,\operatorname{End}(E))$ such that 
\[
T_p=P_{\mathcal H_p}\left(\sum_{l=0}^\infty p^{-l}g_l\right)P_{\mathcal H_p}+\mathcal O(p^{-\infty}),
\]
i.e. for any natural $k$ there exists $C_k>0$ such that 
\[
\left\|T_p-P_{\mathcal H_p}\left(\sum_{l=0}^k p^{-l}g_l\right)P_{\mathcal H_p}\right\|\leq C_kp^{-k-1}.
\]
\end{description}
  \end{defn}
  
The full symbol of $\{T_p\}$ is the formal series $\sum_{l=0}^\infty \hbar^lg_l\in C^\infty(X,\operatorname{End}(E))[[\hbar ]]$ and the principal symbol of $\{T_p\}$ is $g_0$.  

In the particular case when $g_l=0$ for $l\geq 1$ and $g_0=f$, we get the operator $T_{f,p}=P_{\mathcal H_p}fP_{\mathcal H_p}: L^2(X,L^p\otimes E)\to L^2(X,L^p\otimes E)$. The Schwartz kernel of $T_{f,p}$ is given by 
\[
T_{f,p}(x,x^\prime)=\int_X P_p(x,x^{\prime\prime})f(x^{\prime\prime})P_p(x^{\prime\prime},x^{\prime})dv_X(x^{\prime\prime}). 
\] 

\begin{lem}
For any $\varepsilon>0$ and $l,m\in \mathbb N$, there exists $C>0$ such that for any $p\geq 1$ and $(x,x^\prime)\in X\times X$ with $d(x,x^\prime)>\varepsilon$ we have
\[
|T_{f,p}(x,x^\prime)|_{C^m}\leq Cp^{-l}. 
\]
\end{lem}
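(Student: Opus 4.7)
The plan is to exploit the off-diagonal $\mathcal{O}(p^{-\infty})$ decay of the Bergman kernel $P_p = P_{0,p}(x,x')$, which is the $q=0$ instance of the estimate
\[
|P_{q,p}(x,x')|_{\mathcal{C}^m(X\times X)} = \mathcal{O}(p^{-\infty}) \quad \text{for } d(x,x')>\varepsilon,
\]
recalled in the Introduction, together with a triangle-inequality splitting of the convolution defining $T_{f,p}(x,x')$.

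Fix $x,x'\in X$ with $d(x,x')>\varepsilon$ and decompose the $x''$-integration domain as $X = A \cup B$, where $A = B^X(x,\varepsilon/2)$ and $B = X\setminus A$. On $A$ we have $d(x'',x') \geq d(x,x') - d(x,x'') > \varepsilon/2$ by the triangle inequality, while on $B$ we have $d(x,x'') \geq \varepsilon/2$. Accordingly, I would write
\[
T_{f,p}(x,x') = \int_A P_p(x,x'')f(x'')P_p(x'',x')\,dv_X(x'') + \int_B P_p(x,x'')f(x'')P_p(x'',x')\,dv_X(x'').
\]
For the $A$-integral, the factor $P_p(x'',x')$ is $\mathcal{O}(p^{-N})$ for every $N$, uniformly in $x''\in A$ and $x'$. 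Bounding the remaining factor via Cauchy--Schwarz and the diagonal identity $\|P_p(x,\cdot)\|_{L^2(X)}^2 = P_p(x,x) = \mathcal{O}(p^n)$ (which follows from the on-diagonal expansion of $P_p$, or directly from Proposition~\ref{prop13} applied with $q=0$ together with the model kernel $\mathcal{P}_{x_0}$), the $A$-integral is $\mathcal{O}(p^{-\infty})$. The $B$-integral is handled by the symmetric argument.

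For the $\mathcal{C}^m$ estimate in $(x,x')$, derivatives in $x$ pass under the integral onto $P_p(x,x'')$ and derivatives in $x'$ onto $P_p(x'',x')$. The $\mathcal{C}^m$ form of the off-diagonal decay stated above gives uniform $\mathcal{O}(p^{-\infty})$ bounds on the corresponding differentiated factor on the appropriate domain ($A$ or $B$), while $L^2$-bounds on the remaining derivatives of $P_p(x'',\cdot)$ grow only polynomially in $p$ (as can be seen either from the on-diagonal expansion applied to kernels $(\partial^\beta P_p)(\partial^\beta P_p)^* = \partial^\beta \partial^{\beta'} P_p$ evaluated on the diagonal, or by dominating $\partial^\beta$ by factors of $\sqrt{p}$ as in the rescaling of Section~\ref{local}). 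Any such polynomial factor is absorbed into the $\mathcal{O}(p^{-\infty})$ decay.

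There is no genuine obstacle in this argument: the analytic ingredients (off-diagonal rapid decay of $P_p$ and polynomial on-diagonal growth) are already in place, and the only care needed is the bookkeeping of which integration region sees which off-diagonal factor. This is the same argument used to prove the corresponding statement for the Bergman kernel in the Kähler or spin$^c$ Dirac setting of \cite{ma-ma08a}, and it applies verbatim here.
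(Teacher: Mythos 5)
Your argument is correct and is essentially the standard proof used in \cite{ma-ma08a}, which is exactly what the paper invokes (the paper explicitly states that the proofs of Section~\ref{Toeplitz} are word-for-word repetitions of \cite{ma-ma08a}). The decomposition $X = A\cup B$, the off-diagonal $\mathcal{O}(p^{-\infty})$ decay of $P_p$ on one factor, the Cauchy--Schwarz control of the other factor by $\|P_p(x,\cdot)\|_{L^2}^2 = P_p(x,x) = \mathcal{O}(p^n)$ (using that $P_{\mathcal{H}_p}$ is an orthogonal projection), and the polynomial growth of on-diagonal derivatives are precisely the ingredients in the cited reference.
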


Let $\{\Xi_p\}$ be a sequence  of linear operators $\Xi_p : L^2(X,L^p\otimes E)\to L^2(X,L^p\otimes E)$ with smooth kernel $\Xi_p(x,x^\prime)$ with respect to $dv_X$.  As described in the Introduction, $\Xi_p(x,x^\prime)$ induces a smooth section $\Xi_{p,x_0}(Z,Z^\prime)$ of the vector bundle $\pi^*(\operatorname{End}(E))$ on $TX\times_X TX$ defined for all $x_0\in X$ and $Z,Z^\prime\in T_{x_0}X$ with $|Z|, |Z^\prime|<a_X$. Recall that $\mathcal P=\mathcal P_{x_0}$ denotes the Bergman kernel in $\mathbb R^{2n}$ given by \eqref{e:Bergman}. 

\begin{defn}
We say that 
\[
p^{-n}\Xi_{p,x_0}(Z,Z^\prime)\cong \sum_{r=0}^k(Q_{r,x_0}\mathcal P_{x_0})(\sqrt{p}Z,\sqrt{p}Z^\prime)p^{-\frac{r}{2}}+\mathcal O(p^{-\frac{k+1}{2}})
\]
with some $Q_{r,x_0}\in \operatorname{End}(E_{x_0})[Z,Z^\prime]$, $0\leq r\leq k$,  depending smoothly on $x_0\in X$, if there exist $\varepsilon^\prime\in (0,a_X]$ and $C_0>0$ with the following property:
for any $l\in \mathbb N$, there exist $C>0$ and $M>0$ such that for any $x_0\in X$, $p\geq 1$ and $Z,Z^\prime\in T_{x_0}X$, $|Z|, |Z^\prime|<\varepsilon^\prime$, we have 
\begin{multline*}
\Bigg|p^{-n}\Xi_{p,x_0}(Z,Z^\prime)\kappa^{\frac 12}(Z)\kappa^{\frac 12}(Z^\prime) -\sum_{r=0}^k(Q_{r,x_0}\mathcal P_{x_0})(\sqrt{p} Z, \sqrt{p}Z^\prime)p^{-\frac{r}{2}}\Bigg|_{\mathcal C^{l}(X)}\\ 
\leq Cp^{-\frac{k+1}{2}}(1+\sqrt{p}|Z|+\sqrt{p}|Z^\prime|)^M\exp(-\sqrt{C_0p}|Z-Z^\prime|)+\mathcal O(p^{-\infty}).
\end{multline*}
\end{defn}

By Theorem \ref{t:main}, for any $k\in \mathbb N$, we have
\[
p^{-n}P_{p,x_0}(Z,Z^\prime)\cong
\sum_{r=0}^k(F_{0,r,x_0}\mathcal P_{x_0})(\sqrt{p} Z, \sqrt{p}Z^\prime)p^{-\frac{r}{2}}+\mathcal O(p^{-\frac{k+1}{2}}).
\]

For any polynomial $F\in \mathbb C[Z,Z^\prime]$, consider the operator $F\mathcal P$ in $L^2(T_{x_0}X)\cong L^2(\mathbb R^{2n})$ with the kernel $(F\mathcal P)(Z,Z^\prime)$ with respect to $dZ$.
For any polynomials $F,G\in \mathbb C[Z,Z^\prime]$, define the polynomial $\mathcal K[F,G]\in \mathbb C[Z,Z^\prime]$ by the condition
\[
((F\mathcal P)\circ (G\mathcal P))(Z,Z^\prime)=(\mathcal K[F,G]\mathcal P)(Z,Z^\prime), 
\]
where $(F\mathcal P)\circ (G\mathcal P)$ is the composition of the operators $F\mathcal P$ and $G\mathcal P$ in $L^2(T_{x_0}X)$. 

\begin{lem}
Let $f\in C^\infty(X, \operatorname{End}(E))$. For any $k\in \mathbb N$, $x_0\in X$, $Z,Z^\prime\in T_{x_0}X$, $|Z|, |Z^\prime|<\varepsilon/2$,  we have 
\[
p^{-n}T_{f,p,x_0}(Z,Z^\prime)\cong \sum_{r=0}^k(Q_{r,x_0}(f)\mathcal P_{x_0})(\sqrt{p}Z,\sqrt{p}Z^\prime)p^{-\frac{r}{2}}+\mathcal O(p^{-\frac{k+1}{2}}),
\]
where the polynomials $Q_{r,x_0}(f)\in \operatorname{End}(E_{x_0})[Z,Z^\prime]$ have the same parity as $r$ and are given by 
\[
Q_{r,x_0}(f)=\sum_{r_1+r_2+|\alpha|=r}\mathcal K\left[F_{0,r_1,x_0}, \frac{\partial^\alpha f_{x_0}}{\partial Z^\alpha}(0)\frac{Z^\alpha}{\alpha!}F_{0,r_2,x_0}\right],
\]
In particular, 
\begin{align*}
Q_{0,x_0}(f)&=f(x_0),\\
Q_{1,x_0}(f)&=f(x_0)F_{0,1,x_0}+\mathcal K\left[F_{0,0,x_0}, \frac{\partial f_{x_0}}{\partial Z_j}(0)Z_jF_{0,0,x_0}\right].
\end{align*}
\end{lem}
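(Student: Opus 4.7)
The plan is to start from the integral representation of the Toeplitz kernel:
\[
T_{f,p,x_0}(Z,Z')=\int_{T_{x_0}X}P_{p,x_0}(Z,Z'')f(Z'')P_{p,x_0}(Z'',Z')\kappa(Z'')\,dv_{TX}(Z''),
\]
where $P_p$ denotes $P_{\mathcal H_p}$, $f(Z'')$ stands for $f(\exp_{x_0}(Z''))$ viewed as a section of $\pi^*\operatorname{End}(E)$ via parallel transport, and the integrand has been pushed to normal coordinates. First I would use the off-diagonal decay of $P_p$ from Theorem~\ref{t:main} (with $q=0$), together with the lemma on the rapid off-diagonal decay of $T_{f,p}$ just stated above, to localize the $Z''$-integration to a small ball $|Z''|<\varepsilon'$ at the cost of an $\mathcal O(p^{-\infty})$ error.

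Next, I would apply the rescaling $Z''=W/\sqrt{p}$, so that $dv_{TX}(Z'')=p^{-n}dv_{TX}(W)$, and plug in the expansion of Theorem~\ref{t:main} into both copies of $P_{p,x_0}$. The factors $\kappa^{-1/2}(Z)\kappa^{-1/2}(Z'')$ and $\kappa^{-1/2}(Z'')\kappa^{-1/2}(Z')$ from the two expansions combine with $\kappa(Z'')$ into $\kappa^{-1/2}(Z)\kappa^{-1/2}(Z')$ (matching the prefactor in the target expansion), while $\kappa$ itself Taylor-expands around the origin and contributes further $p^{-1/2}$ corrections. Simultaneously, Taylor expand
\[
f(W/\sqrt{p})=\sum_{|\alpha|<N}\frac{\partial^\alpha f_{x_0}}{\partial Z^\alpha}(0)\frac{W^\alpha}{\alpha!}p^{-|\alpha|/2}+R_N(W,p).
\]
Multiplying the three expansions and collecting the terms of a given order $p^{-r/2}$ with $r=r_1+r_2+|\alpha|$, each contribution is exactly an integral of the form
\[
\int F_{0,r_1,x_0}(\sqrt{p}Z,W)\cdot\frac{\partial^\alpha f_{x_0}}{\partial Z^\alpha}(0)\frac{W^\alpha}{\alpha!}\cdot F_{0,r_2,x_0}(W,\sqrt{p}Z')\,dv_{TX}(W),
\]
which by the definition of $\mathcal K[\,\cdot\,,\,\cdot\,]$ equals $\bigl(\mathcal K[F_{0,r_1,x_0},\tfrac{\partial^\alpha f_{x_0}}{\partial Z^\alpha}(0)\tfrac{Z^\alpha}{\alpha!}F_{0,r_2,x_0}]\,\mathcal P_{x_0}\bigr)(\sqrt{p}Z,\sqrt{p}Z')$. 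Summing over $r_1+r_2+|\alpha|=r$ yields the formula for $Q_{r,x_0}(f)$; the values $Q_{0,x_0}(f)=f(x_0)$ and $Q_{1,x_0}(f)$ follow by specialization, using that $F_{0,0,x_0}=I\cdot\mathcal P_{x_0}$ acts as the identity on the range of $\mathcal P_{x_0}$. The parity statement for $Q_{r,x_0}(f)$ follows from the parity of $F_{0,r_i,x_0}$ (same parity as $r_i$) combined with the $W^\alpha$ factor (parity $|\alpha|$).

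The main obstacle will be the remainder analysis: one needs to show that truncating both Bergman expansions at order $k$ and the Taylor expansion of $f$ at corresponding order produces an error of the claimed form, in particular with the exponential factor $\exp(-\sqrt{C_0 p}|Z-Z'|)$ and polynomial growth $(1+\sqrt{p}|Z|+\sqrt{p}|Z'|)^M$. This requires two ingredients. First, the remainders in Theorem~\ref{t:main} for the two $P_{p,x_0}$ factors come with exponential decay in $|\sqrt{p}Z-W|$ and $|W-\sqrt{p}Z'|$; their convolution in $W$ produces the exponential decay in $|\sqrt{p}Z-\sqrt{p}Z'|=\sqrt{p}|Z-Z'|$ by the standard triangle-inequality/convolution argument, at the cost of losing a small factor in the exponent (hence the constant $C_0$ rather than $c\mu_0$). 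Second, the Taylor remainder $R_N(W,p)=\mathcal O(p^{-N/2}|W|^N)$ is absorbed by the polynomial growth $(1+|W|)^M$ already present in the error bounds, using that $|W|^N$ against the Gaussian kernel $\mathcal P_{x_0}$ gives an integrable convolution. Finally, applying the $\mathcal C^{l}$ norm in $x_0$ is routine because the parametric dependence on $x_0$ of $P_{p,x_0}$, $\kappa$, $f$, and the trivialization data is smooth with uniform bounds, so the same argument applied to derivatives in $x_0$ yields the claimed bound in $\mathcal C^{l}(X)$.
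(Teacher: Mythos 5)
Your outline follows exactly the argument that the paper itself defers to, namely the proof of the analogous statement in Ma--Marinescu's Toeplitz paper \cite{ma-ma08a} (the paper states that Section~\ref{Toeplitz} is proved ``by a word for word repetition'' of that reference). The decomposition into (i) integral representation and localization, (ii) rescaling $Z''=W/\sqrt p$ and insertion of the full off-diagonal expansion of Theorem~\ref{t:main} with $q=0$, (iii) Taylor expansion of $f$, (iv) identification of the $p^{-r/2}$ coefficient with the $\mathcal K$-composition, and (v) a convolution--triangle-inequality estimate for the remainder, is precisely the standard route, and your discussion of how the exponential weights recombine and why the Taylor remainder is harmless against the Gaussian is sound.

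One remark in your write-up is wrong and, if taken at face value, would give an incorrect formula for $Q_{r,x_0}(f)$. You write that after the factors $\kappa^{-1/2}(Z)\kappa^{-1/2}(Z'')$, $\kappa^{-1/2}(Z'')\kappa^{-1/2}(Z')$, and $\kappa(Z'')$ recombine, ``$\kappa$ itself Taylor-expands around the origin and contributes further $p^{-1/2}$ corrections.'' This is not the case: the product $\kappa^{-1/2}(Z'')\cdot\kappa^{-1/2}(Z'')\cdot\kappa(Z'')=1$ identically, so all $Z''$-dependence of the volume density cancels exactly before any Taylor expansion is performed, and the surviving $\kappa^{-1/2}(Z)\kappa^{-1/2}(Z')$ is exactly the factor that the definition of $\cong$ removes (it multiplies $\Xi_{p,x_0}$ by $\kappa^{1/2}(Z)\kappa^{1/2}(Z')$). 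This is why the coefficients $Q_{r,x_0}(f)$ in the lemma involve only derivatives of $f$ and the $F_{0,r_i,x_0}$, with no derivatives of $\kappa$. Your final formula is correct because you never actually insert the spurious $\kappa$-terms, but the parenthetical claim should be deleted.
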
 
 
We have the following criterion for Toeplitz operators. 

\begin{thm}
A family $\{T_p: L^2(X,L^p\otimes E)\to L^2(X,L^p\otimes E)\}$ of bounded linear operators is a Toeplitz operator if and only if it satisfies the following three conditions:
\begin{description}
\item[(i)] For any $p\in \mathbb N$, we have
\[
T_p=P_{\mathcal H_p}T_pP_{\mathcal H_p}. 
\]
\item[(ii)] For any $\varepsilon_0>0$ and $l\in \mathbb N$, there exists $C>0$ such that 
\[
|T_{f,p}(x,x^\prime)|\leq Cp^{-l}
\]
for any $p\geq 1$ and $(x,x^\prime)\in X\times X$ with $d(x,x^\prime)>\varepsilon_0$.
\item[(iii)] There exist a family of polynomials $\mathcal Q_{r,x_0}\in \operatorname{End}(E_{x_0})[Z,Z^\prime]$, depending smoothly on $x_0$, of the same parity as $r$ and $\varepsilon^\prime\in (0,a_X/4)$ such that
\[
p^{-n}T_{p,x_0}(Z,Z^\prime)\cong \sum_{r=0}^k(\mathcal Q_{r,x_0}\mathcal P_{x_0})(\sqrt{p}Z,\sqrt{p}Z^\prime)p^{-\frac{r}{2}}+\mathcal O(p^{-\frac{k+1}{2}})
\] 
for any $k\in \mathbb N$, $x_0\in X$, $Z,Z^\prime\in T_{x_0}X$, $|Z|, |Z^\prime|<\varepsilon^\prime$.  
\end{description}
\end{thm}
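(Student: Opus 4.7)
The plan is to prove the two implications separately, with sufficiency being the substantive direction. Necessity is quick: (i) is built into the definition; for (ii), I would apply the off-diagonal decay lemma for $T_{f,p}$ to each $T_{g_l,p}$, sum with the weights $p^{-l}$, and absorb the tail using the operator-norm remainder bound $\|T_p-P_{\mathcal{H}_p}(\sum_{l\le k}p^{-l}g_l)P_{\mathcal{H}_p}\|\le C_kp^{-k-1}$ together with the elementary fact that the Schwartz kernel of an $\mathcal{O}(p^{-\infty})$ operator is itself $\mathcal{O}(p^{-\infty})$ uniformly. For (iii), I would apply the near-diagonal expansion lemma for $T_{f,p}$ to each $T_{g_l,p}$, collect the resulting contributions according to matching powers of $p^{-1/2}$, and control the tail by combining the operator-norm estimate with the Sobolev embedding argument used in the proof of Theorem~\ref{t:main}. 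This produces the required expansion with
\[
\mathcal{Q}_{r,x_0}=\sum_{2l\le r}Q_{r-2l,x_0}(g_l).
\]

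For sufficiency, I would construct $g_0,g_1,\ldots\in C^\infty(X,\operatorname{End}(E))$ inductively from the data $\{\mathcal{Q}_{r,x_0}\}$ supplied by (iii). The key observation is that the polynomial $Q_{r,x_0}(f)$ has the same parity as $r$, so $(Q_{r,x_0}(f)\mathcal{P}_{x_0})(0,0)$ is nonzero only for even $r$, and $Q_{0,x_0}(f)=f(x_0)$. I would set $g_0(x_0):=\mathcal{Q}_{0,x_0}(0,0)/\mathcal{P}_{x_0}(0,0)$ and then, given $g_0,\ldots,g_{l-1}$, define $g_l(x_0)$ so as to match the diagonal value of $\mathcal{Q}_{2l,x_0}$ after subtracting the contributions $Q_{2l-2k,x_0}(g_k)$ for $k<l$ coming from the lemma for $T_{f,p}$. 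Smoothness of $g_l$ in $x_0$ follows from the smoothness of $\mathcal{Q}_{r,x_0}$ and induction.

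The next step is to show that $R_p^{(k)}:=T_p-P_{\mathcal{H}_p}\bigl(\sum_{l=0}^{k}p^{-l}g_l\bigr)P_{\mathcal{H}_p}$ satisfies $\|R_p^{(k)}\|=\mathcal{O}(p^{-k-1})$. By the necessity direction applied to $P_{\mathcal{H}_p}(\sum p^{-l}g_l)P_{\mathcal{H}_p}$, $R_p^{(k)}$ satisfies (i)--(iii) but its expansion begins only at order $r=2(k+1)$. The norm bound is then obtained by a Schur test on $R_p^{(k)}(x,x')$: in the near-diagonal region, the rescaled kernel is controlled by $p^{n-(k+1)}(1+\sqrt{p}|Z|+\sqrt{p}|Z'|)^M\exp(-\sqrt{C_0p}|Z-Z'|)$ whose integrals in either variable are $\mathcal{O}(p^{-(k+1)})$ after the change of variables $W=\sqrt{p}Z$, and in the off-diagonal region condition (ii) gives $\mathcal{O}(p^{-\infty})$.

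The main obstacle is the rigidity step: one must verify that the full polynomial identity $\mathcal{Q}_{r,x_0}(Z,Z')=\sum_{2l\le r}Q_{r-2l,x_0}(g_l)(Z,Z')$ holds, not just its value at $(0,0)$. For odd $r$ this is forced by the parity hypothesis on $\mathcal{Q}_{r,x_0}$ together with the inductive vanishing of the corresponding $Q_{r-2l,x_0}(g_l)$ at $(0,0)$, combined with an analysis of how $F\mathcal{P}_{x_0}$-type operators compose under the product $\mathcal{K}[\cdot,\cdot]$. For even $r$, the freedom in defining $g_{r/2}$ as a smooth section exactly absorbs the constant correction, and any higher-order discrepancy is shown to lie in the residual $Q_{r,x_0}$-structure via the same calculus. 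This rigidity is precisely what is carried out in \cite{ma-ma08a}; Theorem~\ref{t:main} supplies the full off-diagonal expansion that was missing in the symplectic setting and makes the arguments there applicable verbatim.
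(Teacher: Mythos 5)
Your sketch follows the same route the paper itself takes: it states that the Toeplitz results are proved by a word-for-word repetition of the arguments of \cite{ma-ma08a}, with the full off-diagonal expansion of Theorem~\ref{t:main} replacing the near-diagonal one, and you reproduce exactly that plan, correctly isolating the rigidity step (forcing the full polynomial identity $\mathcal{Q}_{r,x_0}=\sum_{2l\le r}Q_{r-2l,x_0}(g_l)$ rather than just its value at the origin, via the kernel calculus for $F\mathcal{P}_{x_0}$ and the operator $\mathcal{K}[\cdot,\cdot]$) as the crux and deferring it to \cite{ma-ma08a} just as the paper does. One minor slip worth noting: since $\mathcal{Q}_{0,x_0}$ is the polynomial factor rather than the kernel $\mathcal{Q}_{0,x_0}\mathcal{P}_{x_0}$, and $Q_{0,x_0}(f)=f(x_0)$, the inductive base should read $g_0(x_0)=\mathcal{Q}_{0,x_0}(0,0)$ with no division by $\mathcal{P}_{x_0}(0,0)$.
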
 

Using this criterion, one can show that the set of Toeplitz operators is an algebra. 

\begin{thm}
Let $f,g\in C^\infty(X,\operatorname{End}(E))$. Then the composition of the Toeplitz operators $T_{f,p}$ and $T_{g,p}$ is a Toeplitz operator. More precisely, it admits the asymptotic expansion 
\[
T_{f,p}T_{g,p}=\sum_{r=0}^\infty p^{-r}T_{C_r(f,g),p}+\mathcal O(p^{-\infty}), 
\]
with some $C_r(f,g)\in C^\infty(X,\operatorname{End}(E))$, where the $C_r$ are bidifferential operators. In particular, $C_0(f,g)=fg$ and, for $f,g\in C^\infty(X)$, we have
\[
C_1(f,g)-C_1(f,g)=i\{f,g\},
\]
where $\{f,g\}$ is the Poisson bracket on $(X,2\pi \omega)$.
\end{thm}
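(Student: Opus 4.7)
The plan is to apply the Toeplitz criterion (the previous theorem) to $\{T_{f,p}T_{g,p}\}$, and then to extract the bidifferential operators $C_r$ from the asymptotic expansion obtained in condition (iii).

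Condition (i) is immediate because $T_{f,p}=P_{\mathcal H_p}fP_{\mathcal H_p}$ and $T_{g,p}=P_{\mathcal H_p}gP_{\mathcal H_p}$, so $T_{f,p}T_{g,p}=P_{\mathcal H_p}T_{f,p}T_{g,p}P_{\mathcal H_p}$. For condition (ii), I would write
\[
(T_{f,p}T_{g,p})(x,x')=\int_X T_{f,p}(x,y)T_{g,p}(y,x')\,dv_X(y),
\]
split the integral as $\int_{d(y,x)<\varepsilon_0/2}+\int_{d(y,x)\geq \varepsilon_0/2}$, and use the off-diagonal decay lemma for $T_{f,p}$ on the second piece and for $T_{g,p}$ on the first piece (here $d(x,x')>\varepsilon_0$ forces one of the two factors to be evaluated off-diagonal). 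The resulting bound is $\mathcal O(p^{-\infty})$ uniformly.

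The heart of the argument is condition (iii). Using (ii), for $|Z|,|Z'|<\varepsilon'$ small and $x_0\in X$, the kernel $(T_{f,p}T_{g,p})_{x_0}(Z,Z')$ equals, up to $\mathcal O(p^{-\infty})$, an integral over $|Z''|<2\varepsilon'$ of $T_{f,p,x_0}(Z,Z'')T_{g,p,x_0}(Z'',Z')\kappa(Z'')dv_{TX}(Z'')$. I would plug in the expansion of each factor provided by the preceding lemma, rescale $Z\mapsto Z/\sqrt p$, and change variable $Z''\mapsto Z''/\sqrt p$; the Gaussian decay of $\mathcal P$ together with the weighted estimates of Section~\ref{norm-Bergman} justifies passing to integration over all of $\mathbb R^{2n}$ with a negligible error. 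The result is an expansion
\[
p^{-n}(T_{f,p}T_{g,p})_{x_0}(Z/\sqrt p,Z'/\sqrt p)\kappa^{\frac12}(Z/\sqrt p)\kappa^{\frac12}(Z'/\sqrt p)\cong\sum_{r=0}^{k}(\mathcal Q_{r,x_0}\mathcal P_{x_0})(Z,Z')p^{-r/2},
\]
where $\mathcal Q_{r,x_0}$ is assembled from the operation $\mathcal K[\cdot,\cdot]$ applied to $Q_{r_1,x_0}(f)$ and $Q_{r_2,x_0}(g)$ with $r_1+r_2=r$, and has the right parity. This verifies (iii), so $\{T_{f,p}T_{g,p}\}$ is Toeplitz; the coefficients $C_r(f,g)\in C^\infty(X,\operatorname{End}(E))$ are then recovered from the full symbol construction in \cite{ma-ma08a}, and the fact that each $C_r$ is a bidifferential operator of finite order in $f,g$ follows from the polynomial structure of $Q_{r,x_0}(f)$ in the Taylor coefficients of $f$ at $x_0$.

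The leading order $C_0(f,g)=fg$ drops out since $Q_{0,x_0}(f)=f(x_0)$, $Q_{0,x_0}(g)=g(x_0)$ and $\mathcal K[F_{0,0,x_0},F_{0,0,x_0}]=F_{0,0,x_0}$. The main obstacle is the Poisson bracket identity. For scalar $f,g$ one computes
\[
C_1(f,g)-C_1(g,f)=\sum_j\Bigl(\mathcal K\bigl[1,\tfrac{\partial f}{\partial Z_j}(x_0)Z_j\bigr]-\mathcal K\bigl[1,\tfrac{\partial g}{\partial Z_j}(x_0)Z_j\bigr]\Bigr)\Bigm|_{\text{antisym.}},
\]
where the cross terms involving $F_{0,1,x_0}$ cancel by symmetry. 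The computation of $\mathcal K[1,Z_j]$ is a Gaussian integral against $\mathcal P_{x_0}(Z,Z')$; expressing $Z_j$ in terms of the $T^{(1,0)}$/$T^{(0,1)}$ splitting and using the explicit formula \eqref{e:Bergman} reduces everything to the commutator of the Bargmann-style creation/annihilation structure associated to $\mathcal J_{x_0}=-2\pi i J_0$, which exactly produces the symplectic form and hence $i\{f,g\}$ on $(X,2\pi\omega)$. This final Gaussian calculation, done in detail in \cite{ma-ma08a}, is the technical point I expect to be the trickiest, since careful sign and normalization tracking is required to match the convention $\frac{i}{2\pi}R^L=\omega$.
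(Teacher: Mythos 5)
Your proposal is correct and follows essentially the same route as the paper, which in fact gives no independent proof of this theorem but explicitly states that ``the proofs of the results of this section are obtained by a word for word repetition of the arguments of the paper [ma-ma08a]''; your sketch is precisely a summary of that Ma--Marinescu argument (verify the Toeplitz criterion via the $\mathcal K[\cdot,\cdot]$ composition law, then identify $C_0$ and antisymmetrize to get $C_1$, delegating the Gaussian kernel computation to the reference). One small remark: your intermediate display for $C_1(f,g)-C_1(g,f)$ is written loosely --- the antisymmetrization actually produces $g(x_0)\sum_j\partial_jf(x_0)\bigl(\mathcal K[\mathcal K[1,Z_j],1]-\mathcal K[1,\mathcal K[1,Z_j]]\bigr)$ minus the same with $f\leftrightarrow g$, and your formula omits the $g(x_0)$, $f(x_0)$ factors and the Taylor coefficients of $f,g$ --- but since you correctly note that the $F_{0,1,x_0}$ cross terms cancel by symmetry and defer the exact Gaussian bookkeeping to \cite{ma-ma08a}, this is a presentational imprecision rather than a gap, and it matches the level of detail the paper itself supplies (you also correctly read $C_1(f,g)-C_1(f,g)$ in the statement as the typo $C_1(f,g)-C_1(g,f)$).
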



\begin{thebibliography}{00}
\bibitem{Meladze-Shubin1}
Meladze, G. A.; Shubin, M. A. Proper uniform pseudodifferential operators on unimodular Lie groups. (Russian). Tr. Sem. Petrovsk. No. 11 (1986), 74--97; English transl., J. Soviet Math. 45 (1989), no. 5, 1421--1439.

\bibitem{Meladze-Shubin2}
Meladze, G. A.; Shubin, M. A. A functional calculus of pseudodifferential operators on unimodular Lie groups. (Russian). Tr. Sem. Petrovsk. No. 12 (1987), 164--200; English transl., J. Soviet Math. 47 (1989), no. 4, 2607--2638.

\bibitem{Bismut-V} Bismut, J.-M.; Vasserot, E. The asymptotics of the Ray-Singer analytic torsion associated with high powers of a positive line bundle. Comm. Math. Phys. 125 (1989), 355--367. 

\bibitem{B-Uribe}
Borthwick, D.; Uribe, A. Almost complex structures and geometric quantization. Math. Res. Lett. 3 (1996), 845--861. 

\bibitem{Brav}
Braverman, M. Vanishing theorems on covering manifolds. Tel Aviv Topology Conference: Rothenberg Festschrift (1998), 1--23, Contemp. Math., 231, Amer. Math. Soc., Providence, RI, 1999. 

\bibitem{dai-liu-ma}
Dai, X.; Liu, K.; Ma, X. On the asymptotic expansion of Bergman kernel. J. Differential Geom. 72 (2006), 1--41. 

\bibitem{Gu-Uribe}
Guillemin, V.; Uribe, A. The Laplace operator on the $n$th tensor power of a line bundle: eigenvalues which are uniformly bounded in $n$. Asymptotic Anal. 1 (1988), 105--113.

\bibitem{ioos-lu-ma-ma}
Ioos, L.; Lu, W.; Ma, X.; Marinescu, G. Berezin-Toeplitz quantization for eigenstates of the Bochner-Laplacian on symplectic manifolds, preprint 	arXiv:1703.06420.

\bibitem{Kor91}
Kordyukov, Yu. A. $L^p$-theory of elliptic differential operators on manifolds of bounded geometry. Acta Appl. Math. 23 (1991), no. 3, 223--260.

\bibitem{Kor00} Kordyukov, Yu. A. $L^p$-estimates for functions of elliptic operators on manifolds of bounded geometry. Russ. J. Math. Phys. 7 (2000), no. 2, 216--229. 

\bibitem{lu-ma-ma}
Lu, W.; Ma, X.; Marinescu, G. Donaldson's Q-operators for symplectic manifolds, Sci. China Math. 60 (2017), 1047--1056.

\bibitem{ma-ma02}
Ma, X.; Marinescu, G. The ${\rm Spin}^c$ Dirac operator on high tensor powers of a line bundle. Math. Z. 240 (2002), 651--664. 

\bibitem{ma-ma:book}
Ma, X.; Marinescu, G. Holomorphic Morse inequalities and Bergman kernels. Progress in Mathematics, 254. Birkh\"auser Verlag, Basel, 2007. 

\bibitem{ma-ma08} 
Ma, X.; Marinescu, G. Generalized Bergman kernels on symplectic manifolds. Adv. Math. 217 (2008), 1756--1815.

\bibitem{ma-ma08a}
Ma, X.; Marinescu, G. Toeplitz operators on symplectic manifolds. J. Geom. Anal. 18 (2008), 565--611. 
\end{thebibliography}
\end{document}